\newtheorem{theorem}{Theorem}[section]
\newtheorem{corollary}[theorem]{Corollary}
\newtheorem{lemma}[theorem]{Lemma}
\newtheorem{claim}[theorem]{Claim}
\theoremstyle{definition}
\newtheorem{definition}[theorem]{Definition}
\def\defeq{\stackrel{\mathrm{def}}{=}}
\def\setof#1{\left\{#1  \right\}}
\def\sizeof#1{\left|#1  \right|}
\def\union{\cup}
\newcommand{\R}{\mathbb{R}}
\newcommand{\AND}{\quad \text{and} \quad}
\newcommand{\mydet}[1]{\det\left[#1\right]}
\newcommand{\tr}{\mathrm{Tr}}
\newcommand{\X}{\mathbf{r}}
\DeclareMathOperator*{\E}{\mathbb{E}}
\newcommand{\smin}{\sigma_\mathrm{min}}
\newcommand{\sr}{\mathrm{srank}}
\newcommand{\Tr}[1]{\mathrm{Tr}\left[ #1 \right]}
\newcommand{\deriv}{\partial_{x}}
\newcommand{\kderiv}[1]{\partial_x^{#1}}
\newcommand{\lmin}{\lambda_{min}}
\newcommand{\amin}[1]{\alpha \mathsf{-}\mathsf{min}\left(#1 \right)}
\newcommand{\aminName}{\alpha \mathsf{-}\mathsf{min}}
\newcommand{\rootnode}{\emptyset}
\def\norm#1{\left\| #1 \right\|}
\begin{document}

\title{
Interlacing Families III: Sharper Restricted Invertibility Estimates
\thanks{
Many of the results in this paper were first announced in lectures by the authors in 2013.
This research was partially supported by NSF grants CCF-0915487, CCF-1111257, CCF-1562041, CCF-1553751, DMS-0902962, DMS-1128155 and DMS-1552520, a Sloan Research Fellowship to Nikhil Srivastava
  a Simons Investigator Award to Daniel Spielman, and a MacArthur Fellowship.
}}

\author{
Adam W. Marcus\\
Princeton University\\
\and
Daniel A. Spielman \\ 
Yale University
\and 
Nikhil Srivastava\\
UC Berkeley
}

\maketitle

\begin{abstract}
We use the method of interlacing families of polynomials to derive a simple
  proof of Bourgain and Tzafriri's Restricted Invertibility Principle,
  and then to sharpen the result in two ways.
We show that the stable rank can be replaced by the Schatten 4-norm stable rank
  and that tighter bounds hold when the number of columns in the matrix under consideration
  does not greatly exceed its number of rows.
Our bounds are derived from an analysis of smallest zeros of Jacobi and associated Laguerre polynomials.
\end{abstract}

\section{Introduction}
The Restricted Invertibility
  Principle of Bourgain and Tzafriri \cite{BourgainTzafriri}
  is a quantitative generalization of the assertion that
  the rank of a $d\times m$ matrix $B$ is the maximum number of linearly
  independent columns that it contains.  
It says that if a matrix $B$ has high {\em stable rank}:
\[
\sr(B) \defeq \frac{ \norm{B}_{F}^{2}}{\norm{B}_{2}^{2}},
\]
then it must contain a large column
submatrix $B_{S}$, of size $d \times |S|$, with large
{\em least singular value}, defined as:
\[
\smin(B_{S}) \defeq \min_{x\neq 0}\frac{\norm{B_{S} x}}{\norm{x}}.
\]
The least singular value of $B$ is a measure of how far the matrix is from being singular.
Bourgain and Tzafriri's result was strengthened in the works of \cite{vershynin2001john, SpielmanSrivastava12,
youssef, naoryoussef}, and has since been a useful tool in Banach space theory, data
mining, and more recently theoretical computer science.

Prior to this work, the sharpest result of this type was the following theorem
of Spielman and Srivastava \cite{SpielmanSrivastava12}:
\begin{theorem}\label{thm:ss}
Suppose $B$ is an $d \times m$ matrix and $k\le \sr(B)$ is an integer.
Then there exists a subset $S\subset [m]$ of size $k$ such that
\begin{equation}\label{eqn:ssthm}\smin(B_S)^2\ge
\left(1-\sqrt{\frac{k}{\sr(B)}}\right)^2\frac{\|B\|_F^2}{m}.\end{equation}
\end{theorem}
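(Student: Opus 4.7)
My plan is to prove Theorem 1.1 using the method of interlacing families of polynomials. The starting point is to consider a uniformly random size-$k$ subset $S \subset [m]$ and the family of characteristic polynomials $\{\chi_S(x) = \det(xI - B_S B_S^T)\}_{|S|=k}$. The goal of the interlacing-families machinery is to exhibit some fixed $S$ whose smallest root is at least that of the averaged polynomial $\mu(x) = \E_S [\chi_S(x)]$, thereby reducing the combinatorial selection problem to a root-location problem for a single explicit polynomial.

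First, I would verify that the polynomials $\chi_S$ form an interlacing family indexed by the tree of partial subsets, where level $j$ is indexed by the $j$-subsets of $[m]$ and each internal node extends by one new index. The main thing to check is that, at every internal node, the conditional average of $\chi_S$ over the leaves below it is real-rooted. My preferred route is to encode the uniform choice of a $k$-subset as a differential operator acting on a multivariate real-stable polynomial: the polynomial $\det(xI + \sum_i t_i b_i b_i^T)$ is real stable by a classical determinantal fact, and forming the restricted average $\mu$ (and each conditional expectation) corresponds to applying operators of the form $(\alpha + \beta \partial_{t_i})$ and then setting the $t_i$ to constants, both of which preserve real stability. Given this, the standard interlacing families conclusion yields some $S$ with $\lmin(B_S B_S^T) \ge \lmin(\mu)$.

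It remains to lower-bound $\lmin(\mu)$. Since $\mu$ arises from a uniform choice of $k$ out of $m$ rank-one updates, I would expect $\mu$ to be a multivariate avatar of a classical orthogonal polynomial in $k$, with parameters depending on $m$, $k$, $\|B\|_F^2$, and $\|B\|_2^2$. I would try to show that $\mu$ is dominated, in the sense of having smallest root no smaller than, a Jacobi or associated Laguerre polynomial with parameters $\alpha \approx \sr(B) - k$, so that the classical bound $\lmin(L_k^{(\alpha)}) \ge (\sqrt{k+\alpha}-\sqrt{k})^2$ can be invoked. Rescaling by the average squared singular value $\|B\|_F^2/m$ should convert this bound into exactly $(1-\sqrt{k/\sr(B)})^2 \cdot \|B\|_F^2/m$.

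The main obstacle I expect is the root-bound step, not the interlacing step. The interlacing structure is by now a fairly routine application of real-stability closure properties once the polynomial is set up, but bounding $\lmin(\mu)$ in terms of the stable rank $\sr(B) = \|B\|_F^2/\|B\|_2^2$ rather than the more obvious rank or dimension of $B$ is subtle: $\mu$ depends on the full singular value spectrum of $B$, yet the statement only involves its first two Schatten norms. I would attack this by showing that among all $B$ with fixed $\|B\|_F^2$ and fixed $\|B\|_2^2$, the smallest root of $\mu$ is minimized by the matrix whose nonzero singular values are all equal (reducing to the Laguerre case), using a convexity or majorization argument applied to a suitable representation of the roots of $\mu$; this tight case is exactly where the $(1-\sqrt{k/\sr(B)})^2$ constant saturates.
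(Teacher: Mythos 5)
Your high-level framework (interlacing families, reduce to a root bound for the expected characteristic polynomial) is the right one, but your plan diverges from the paper at two points, and one of these creates a real gap.

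First, you sample a uniformly random size-$k$ \emph{subset} of $[m]$ (without replacement), whereas the paper's proof of Theorem~\ref{thm:ss} samples a uniformly random \emph{sequence} $(s_1,\dots,s_k)\in[m]^k$ (with replacement). This choice is not cosmetic: with replacement, Lemma~\ref{lem:randomUpdate} can be applied $k$ times to give the clean formula $\E\det(xI-\sum_i\X_i\X_i^T) = x^{d-k}\prod_{i=1}^d(1-\lambda_i\partial_x)x^k$, where the $\lambda_i$ are the eigenvalues of $BB^T/m$. Without replacement, no comparably tractable product formula holds in the nonisotropic case; the paper only analyzes the subset family in the isotropic case (Section~\ref{sec:jacobi}, Jacobi polynomials), and there the argument leans heavily on $\sum_i u_iu_i^T=I$ via Lemma~\ref{lem:jacobiAddOne}. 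Also note a small but real notational slip: you write $\lmin(B_SB_S^T)$ and $\lmin(\mu)$, but since $B_S$ is $d\times k$ with $k<d$, those quantities are zero. What you need is $\lambda_k$, the $k$-th largest root, which the paper handles through Theorem~\ref{thm:kthRoot}.

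Second, and more seriously, your root-bound plan is to show $\mu$ is ``dominated'' by a Laguerre polynomial via a majorization/convexity argument, reducing to the case where the nonzero singular values are all equal. This is exactly the step you flag as the obstacle, and it is where your proposal stops short of a proof. The paper does not reduce to a classical polynomial at all in the nonisotropic case. Instead, it uses the barrier-shift estimate (Lemma~\ref{lem:barriershift}): applying $(1-\lambda\partial_x)$ advances $\amin{\cdot}$ by at least $1/(1/\lambda+1/\alpha)$, and then Jensen's inequality on the convex map $y\mapsto 1/(1+y/\alpha)$ gives $\sum_i 1/(\lambda_i^{-1}+\alpha^{-1}) \ge 1/(1+\tr(M^2)/\alpha)$. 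Optimizing over $\alpha$ produces $\lambda_k \ge (1-\sqrt{k\tr(M^2)/\tr(M)^2})^2\tr(M)$, i.e.\ the Schatten-4-norm bound, from which Theorem~\ref{thm:ss} follows since $\sr_4(B)\ge\sr(B)$. Your majorization intuition is consistent with this (equal nonzero singular values is indeed the case where $\sr_4=\sr$, hence the worst case for the stable-rank form of the bound), but you have not indicated how to prove the monotonicity of the smallest root of $\prod_i(1-\lambda_i\partial_x)x^k$ under the relevant majorization, and the paper sidesteps this by working with the barrier function rather than the root directly. Also, $\sr(B)$ is generally non-integral, so ``all nonzero singular values equal'' does not literally realize a fixed pair $(\|B\|_F^2,\|B\|_2^2)$; a limiting or interpolation argument would be needed. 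These are the pieces you would have to supply to turn your outline into a proof, and they amount to a different (and less tractable) route than the barrier argument the paper uses.
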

Note that when $k$ is proportional to $\sr(B)$, Theorem \ref{thm:ss} produces a submatrix whose squared
least singular value is at least a constant times the average squared norm of
the columns of $B$, a bound which cannot be improved even for $k=1$. 
Thus, the theorem tells us that the columns of $B_S$ are ``almost
orthogonal'' in that they have least singular value comparable to the average
squared norm of the vectors individually.

To understand the form of the bound in \eqref{eqn:ssthm}, consider the case when
$B B^{T}=I_{d}$, which is sometimes called the ``isotropic'' case. 
In this situation
  we have $\sr(B)=d$, and the right hand side of \eqref{eqn:ssthm} becomes
\begin{equation}\label{eqn:ssiso}
\left(1-\sqrt{\frac{k}{d}}\right)^{2}\frac{d}{m}. 
\end{equation}
The number $(1-\sqrt{{k}/{d}})^{2}$ may seem familiar, and arises in the following
	two contexts.
\begin{enumerate} 
\item It is an asymptotically sharp lowerbound on the least zero of the
	associated Laguerre polynomial $L^{d-k}_{k}(x)$, after an appropriate
	scaling.
In Section~\ref{sec:laguerre} we derive the isotropic case of
  Theorem~\ref{thm:ss} from this fact, using the method of interlacing families of polynomials
\footnote{A version of this proof appeared in the authors' survey paper 
\cite{ICM14}}.

\item It is the lower edge of the support of the Marchenko-Pastur distribution
	\cite{marchenkopastur}, which is the limiting spectral distribution of a
	sequence of random matrices $G_d G_d^T$ where $G_d$ is $k\times d$ with
	appropriately normalized i.i.d. Gaussian entries, as $d \rightarrow
		\infty$ with $k/d$ fixed \cite{marchenkopastur}. This convergence result along with large
	deviation estimates may be used to show that it is not possible to obtain a bound of 
	\begin{equation}\label{eqn:lowerb}
  \left(1-\sqrt{\frac{k}{\sr(B)}}+\delta\right)^2\frac{\|B\|_F^2}{m}\end{equation}
		in Theorem \ref{thm:ss} for any constant $\delta>0$, when $m$ goes to infinity significantly faster then $d$. 
Thus, the bound of Theorem \ref{thm:ss} is asymptotically sharp.  See \cite{SrivastavaBlogRI} for details.
\end{enumerate}

In Section~\ref{sec:laguerre} we present a proof of Theorem
  \ref{thm:ss} in the isotropic case using the method of interlacing polynomials.   
This proof considers the expected characteristic polynomial of
  $B_{S} B_{S}^{T}$ for a randomly chosen $S$.
In this first proof, we choose $S$ by sampling $k$ columns
  with replacement.
This seems like a suboptimal thing to do since it may select
  a column twice (corresponding to a trivial bound of $\sigma_{min}(B_S)=0$), but
it allows us to easily prove that the expected
  characteristic polynomial is an associated Laguerre polynomial
  and that the family of polynomials that arise in the expectation form
  an \textit{interlacing family}, which we define below.
Because these polynomials form an interlacing family, there is some polynomial
  in the family whose $k$th largest zero is at least the $k$th
  largest zero of the expected polynomial.
The bound \eqref{eqn:ssiso} then follows from lower bounds on the zeros
  of associated Laguerre polynomials.

In Section~\ref{sec:schatten},
  we extend this proof technique to show 
  that Theorem \ref{thm:ss} remains true in the nonisotropic case.
In addition, we show a bound that replaces the stable rank   
  with a Schatten 4-norm stable rank, defined by
\[
 \sr_{4}(B) \defeq \frac{\norm{B}_{2}^4}{\norm{B}_{4}^4},
\]
where $\norm{B}_{p}$ denotes the Schatten $p$-norm, i.e., the $\ell_p$ norm of the
singular values of $B$. 
That is, 
\[
 \sr_4(B)= \frac{(\sum_{i}\sigma_i^2)^2}{\sum_{i}\sigma_i^4},
\]
where $\sigma_{1} \le \ldots \le \sigma_{d}$ are the singular values of $B$.
As
\[
 \sr_4(B)= \frac{(\sum_{i}\sigma_i^2)^2}{\sum_{i}\sigma_i^4}\ge
	 \frac{(\sum_{i}\sigma_i^2)^2}{\sigma_1^2\sum_{i\le
	 n}\sigma_i^2}=\sr(B),
\]
this is a strict improvement on Theorem \ref{thm:ss}.
The above inequality is far from tight when $B$ has many moderately large
  singular values. 
In Section~\ref{sec:alg} we give a polynomial time algorithm for 
  finding the subset guaranteed to exist by Theorem 
  \ref{thm:ss}.

In Section~\ref{sec:jacobi},
  we improve on Theorem \ref{thm:ss} in the isotropic case
  by sampling the sets $S$ without replacement.
We show that the resulting expected characteristic polynomials
  are scaled Jacobi polynomials.
We then derive a new bound on the smallest zeros of Jacobi polynomials
  which implies that there exists a set $S$ of $k$ columns for which
\begin{equation}\label{eqn:jacobiBound}
\smin(B_S)^2 \geq  
\frac{\left(\sqrt{d (m-k)} - \sqrt{k(m-d)}\right)^2}{m^2}.
\end{equation}
As
\[
\frac{\left(\sqrt{d(m-k)} - \sqrt{k(m-d)}\right)^2}{m^2}
\geq
\left(1+\frac{\sqrt{d k}}{m}\right) 
\left(1-\sqrt{\frac{k}{d}}\right)^2\frac{d}{m}
\]
this improves on Theorem \ref{thm:ss} by a constant factor
  when $m$ is a constant multiple of $d$.
Note that this does not contradict the lower bound 
  \eqref{eqn:lowerb} from \cite{SrivastavaBlogRI}, which requires that $m \gg 
  d$. 

A number of the results in this paper require a bound on the smallest root of a 
polynomial.
In order to be as self contained as possible, we will either prove such bounds 
directly or take the best known bound directly from the literature.
It is worth noting, however, that a more generic way of proving each of these bounds
  is provided by the framework of polynomial convolutions developed in 
\cite{MSSfinite}.
The necessary inequalities in \cite{MSSfinite} are known to be asymptotically 
  tight (as shown in \cite{Adamfinitefree}) and in some cases improve on the 
  bounds given here.

\section{Preliminaries}
\subsection{Notation}
We denote the Euclidean norm of a vector $x$ by $\norm{x}$.
We denote the operator norm by:
\[
	\norm{B}_{2} =\norm{B}_\infty \defeq \max_{\norm{x} = 1}\norm{B x}.
\]
This also equals the largest singular value of the matrix $B$.
The Frobenius norm of $B$, also known as the Hilbert-Schmidt norm
  and written $\norm{B}_{2}$,
  is the square root of the sum of the squares of the singular values of $B$.
It is also equal to the square root of the sum of the squares of the entries of $B$.

For a real rooted polynomial $p$, we let $\lambda_{k}(p)$ denote the $k$th largest zero of $p$.
When we want to refer to the smallest zero of a polynomial $p$ without specifying its degree,
  we will call it $\lmin (p)$. 
We define the $\ell$th elementary symmetric function of a matrix $A$ with eigenvalues
  $\lambda_{1}, \dotsc , \lambda_{d}$ to be the $\ell$th elementary symmetric function of those eigenvalues:
\[
e_\ell(A)=\sum_{S\subset [d],|S|=\ell} \prod_{i\in S}\lambda_i.
\]
Thus, the characteristic polynomial of $A$ may be expressed as
\[
\mydet{x I - A}
  = \sum_{ \ell =0}^{d} (-1)^{\ell}e_{\ell}(A)x^{d-\ell}.
\]
By inspecting the Leibniz expression for the determinant in terms of permutations, it is
  easy to see that the $e_{\ell}$ may also be expanded in terms of minors.
The Cauchy-Binet identity  says that for every  $d \times m$ matrix $B$
\[
e_{\ell}(B^TB) = \sum_{T \in \binom{[m]}{\ell}} \mydet{B_{T}^TB_{T}},
\]
where $T$ ranges over all subsets of
  size $\ell$ of indices in $[m] \defeq \setof{1,\dotsc , m}$, and
$B_{T}$ denotes the $d \times \ell$ matrix formed by the columns of $B$
  specified by $T$.

We will use the following two formulas to calculate determinants
  and characteristic polynomials of matrices.
You may prove them yourself, or find proofs in
  \cite[Chapter 6]{Meyer} or \cite[Section 15.8]{Harville}.

\begin{lemma}\label{lem:matrix_det}
For any invertible matrix $A$ and vector $u$
\[
\mydet{A + uu^T} 
= \mydet{A}(1 + u^TA^{-1}u)
= \mydet{A}(1 + \Tr{A^{-1} u u^{T}}).
\]
\end{lemma}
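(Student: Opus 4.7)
The plan is to reduce the determinant of $A + uu^T$ to the determinant of a rank-one perturbation of the identity, which can be computed explicitly. Since $A$ is invertible, we may factor
\[
A + uu^T = A\bigl(I + A^{-1} u u^T\bigr),
\]
and use multiplicativity of the determinant to get $\mydet{A + uu^T} = \mydet{A}\,\mydet{I + A^{-1} u u^T}$. So the entire problem reduces to evaluating $\mydet{I + v u^T}$ for the vector $v \defeq A^{-1}u$.

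For that, I would argue directly from the spectrum of a rank-one perturbation of the identity. The matrix $I + v u^T$ acts as the identity on the $(d-1)$-dimensional hyperplane $u^\perp$, since any $x$ with $u^T x = 0$ satisfies $(I + vu^T)x = x$. Meanwhile, $(I + vu^T) v = (1 + u^T v) v$, so $v$ is an eigenvector with eigenvalue $1 + u^T v$ (assuming $v \neq 0$; the degenerate case $u = 0$ gives both sides equal to $\mydet{A}$ trivially). Multiplying the $d$ eigenvalues yields
\[
\mydet{I + A^{-1} u u^T} = 1 + u^T A^{-1} u,
\]
which combined with the factorization above gives the first equality.

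For the second equality, I would simply observe that $u^T A^{-1} u$ is a scalar, hence equal to its own trace, and then use the cyclic property of the trace:
\[
u^T A^{-1} u = \Tr{u^T A^{-1} u} = \Tr{A^{-1} u u^T}.
\]

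There is no real obstacle here; the only thing to be a bit careful about is the edge case where $v = A^{-1}u$ lies in $u^\perp$ (i.e., $u^T A^{-1} u = 0$), but in that case $v$ is still an eigenvector with eigenvalue $1$, and the formula $1 + u^T A^{-1}u = 1$ still correctly records the product of eigenvalues. Alternatively, one could avoid spectral case analysis entirely by invoking Sylvester's determinant identity $\mydet{I_d + XY} = \mydet{I_1 + YX}$ with $X = A^{-1}u$ (a $d \times 1$ matrix) and $Y = u^T$ (a $1 \times d$ matrix), which gives the same conclusion in one line.
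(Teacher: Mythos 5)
The paper does not prove this lemma at all; it simply cites \cite[Chapter 6]{Meyer} and \cite[Section 15.8]{Harville} for the matrix determinant lemma. So there is no in-paper argument to compare against, but your proof is correct and uses the standard factorization $A + uu^T = A(I + A^{-1}uu^T)$ followed by an eigenvalue count on the rank-one perturbation of the identity. One small remark: in the degenerate case $u^T A^{-1} u = 0$, knowing that $v = A^{-1}u$ is an eigenvector with eigenvalue $1$ lying inside $u^\perp$ does not by itself account for all $d$ generalized eigenvalues (the matrix is not diagonalizable there); the cleanest way to close that gap is to note $I + vu^T$ is unipotent when $u^Tv = 0$ (since $(vu^T)^2 = 0$), hence has determinant $1$, or to appeal to the Sylvester identity $\mydet{I_d + XY} = \mydet{I_1 + YX}$ as you suggest, or simply to observe that both sides are polynomials in the entries of $u$ and so the generic-case identity extends by continuity. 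The trace identity $u^T A^{-1} u = \Tr{A^{-1}uu^T}$ via cyclicity of the trace on a scalar is exactly right.
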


\begin{lemma}[Jacobi's formula]
\label{lem:matrix_deriv}
For any square matrices $A, B$,
\[
\deriv \mydet{xA + B} 
= \mydet{xA + B}\Tr{A(xA + B)^{-1}}
\]
\end{lemma}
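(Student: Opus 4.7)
The plan is to reduce Jacobi's formula to the multilinearity of the determinant in its rows, combined with cofactor expansion, and then to recognize the resulting matrix as the classical adjugate. First, I would write $M(x) = xA+B$ and let $r_1(x),\dotsc,r_d(x)$ denote its rows, so that $r_i(x) = x a_i + b_i$ where $a_i$ is the $i$th row of $A$. Since $\det$ is multilinear in the rows of its argument and $\deriv r_i = a_i$, the product rule immediately gives
\[
\deriv \mydet{M(x)} = \sum_{i=1}^{d} D_i(x),
\]
where $D_i(x)$ denotes the determinant of the matrix obtained from $M(x)$ by replacing its $i$th row with $a_i$.

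Next, I would expand each $D_i(x)$ by cofactors along the $i$th row. Writing $C_{ij}(M(x))$ for the $(i,j)$ cofactor of $M(x)$ (note that these are unchanged by the row replacement, since they depend only on the other rows), this expansion produces $\sum_j A_{ij} C_{ij}(M(x))$. Summing over $i$ then yields
\[
\deriv \mydet{xA+B} = \sum_{i,j} A_{ij} C_{ij}(M(x)) = \Tr{A \cdot \mathrm{adj}(M(x))},
\]
where $\mathrm{adj}(M)_{ji} = C_{ij}(M)$ is the classical adjugate. At every $x$ for which $M(x)$ is invertible, the identity $\mathrm{adj}(M) = \det(M) \cdot M^{-1}$ converts the right side into $\mydet{xA+B}\cdot\Tr{A(xA+B)^{-1}}$, which is the claimed formula.

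The only subtlety is that the stated form involves $(xA+B)^{-1}$, which is undefined at the values of $x$ where $xA+B$ is singular. This is a minor issue rather than a real obstacle: $\det(xA+B)$ is a polynomial in $x$ which is either identically zero (in which case both sides of the identity vanish trivially) or has only finitely many roots, so the calculation above establishes the identity on a dense subset of $\R$; since both sides are continuous away from these roots, the formula holds wherever $(xA+B)^{-1}$ is defined. If one wished to avoid the invertibility assumption entirely, one could simply state the formula in the adjugate form $\deriv \mydet{xA+B} = \Tr{A\cdot\mathrm{adj}(xA+B)}$, which is a polynomial identity in $x$ and requires no hypothesis at all.
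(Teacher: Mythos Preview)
Your proof is correct. The paper does not actually supply its own proof of this lemma; it simply states the formula and directs the reader to standard references (Meyer's \emph{Matrix Analysis} and Harville's \emph{Matrix Algebra from a Statistician's Perspective}). Your argument via row-multilinearity and cofactor expansion to reach the adjugate identity $\deriv\mydet{xA+B}=\Tr{A\cdot\mathrm{adj}(xA+B)}$, followed by $\mathrm{adj}(M)=\det(M)M^{-1}$, is exactly the standard textbook derivation one would find in those references, and your handling of the invertibility caveat is appropriate.
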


We also use the following consequence of these formulas that was derived in \cite[Lemma 4.2]{IF2}.
\begin{lemma}\label{lem:randomUpdate}
For every square matrix $A$ and random vector $\X$,
\[
\E \mydet{A - \X \X^{T}} = (1 - \partial_{t} ) \mydet{A + t \E \X \X^{T}} \big|_{t = 0}.
\]
\end{lemma}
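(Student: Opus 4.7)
The plan is to first establish the deterministic version of the identity for a single fixed vector $x$, then lift it to the random case using linearity.

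For the deterministic step, assume first that $A$ is invertible. Applying Jacobi's formula (Lemma~\ref{lem:matrix_deriv}) with the matrices $x x^{T}$ and $A$ in place of $A$ and $B$ gives
\[
\partial_{t} \mydet{A + t x x^{T}} \big|_{t=0} = \mydet{A} \cdot \Tr{x x^{T} A^{-1}} = \mydet{A} \cdot x^{T} A^{-1} x.
\]
On the other hand, Lemma~\ref{lem:matrix_det} applied to the rank-one update $-x x^{T}$ yields $\mydet{A - x x^{T}} = \mydet{A}(1 - x^{T} A^{-1} x)$. Combining these, I get the fixed-vector identity
\[
\mydet{A - x x^{T}} = (1 - \partial_{t}) \mydet{A + t x x^{T}} \big|_{t=0}.
\]

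For the random step, the key observation is that the right-hand side is \emph{affine linear} in the rank-one matrix $x x^{T}$: the value at $t=0$ is the constant $\mydet{A}$, while $\partial_{t}\mydet{A + t x x^{T}}\big|_{t=0} = \mydet{A}\,\Tr{A^{-1} x x^{T}}$ is linear in $x x^{T}$. Taking expectation over $\X$ therefore commutes with both operations, so
\[
\E \mydet{A - \X \X^{T}} = \E\!\left[(1-\partial_{t})\mydet{A + t \X \X^{T}}\big|_{t=0}\right] = (1-\partial_{t})\mydet{A + t \E \X \X^{T}}\big|_{t=0},
\]
where in the second equality I use that $\Tr{A^{-1} \E \X \X^{T}} = \E \Tr{A^{-1} \X \X^{T}}$ by linearity of trace and expectation.

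To dispense with the invertibility assumption, note that both sides of the claimed identity are polynomial functions in the entries of $A$ (the left-hand side is obviously a polynomial, and the right-hand side is a polynomial in $A$ since $\mydet{A + t \E \X \X^{T}}$ is a polynomial in $t$ and the entries of $A$, from which we extract the coefficients of $t^{0}$ and $t^{1}$). Agreement on the open dense set of invertible matrices thus forces agreement for all $A$. I do not anticipate a significant obstacle here; the main point to check carefully is simply that $\partial_{t}\mydet{A+tM}\big|_{t=0}$ is linear in $M$, which is what allows the expectation to be pulled inside.
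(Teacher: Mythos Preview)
Your argument is correct: the deterministic rank-one identity follows from Lemmas~\ref{lem:matrix_det} and~\ref{lem:matrix_deriv} as you describe, the passage to random $\X$ uses precisely the affine linearity of $M\mapsto \partial_t\mydet{A+tM}\big|_{t=0}$, and the polynomial-identity/density argument legitimately removes the invertibility hypothesis on $A$.

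There is nothing to compare against here, because the paper does not prove Lemma~\ref{lem:randomUpdate} at all---it simply imports it as \cite[Lemma~4.2]{IF2}. Your proof is essentially the standard one (and is the argument given in that reference), so there is no substantive methodological difference to discuss.
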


\subsection{$\aminName $}
We bound the zeros of the polynomials we construct 
  by using the barrier function arguments developed in
  \cite{BatsonSpielmanSrivastava}, \cite{IF2}, and
  \cite{MSSfinite}.
For $\alpha > 0$, we define the {\em $\aminName$} of a polynomial $p(x)$ to
  be the least root of $p(x) + \alpha \partial_{x} p(x)$,
  where $\partial_{x}$ indicates partial differentiation with respect
  to $x$.
We sometimes write this in the compact form
\[
	\amin{p(x)} \defeq  \lmin (p(x) + \alpha p'(x)).
\]
We may also define this in terms of the	\textit{lower barrier function}
\[
  \Phi_{p}(x) \defeq  -\frac{p'(x)}{p(x)} 
\]
by observing
\[
  \amin{p} = \min  \setof{z : \Phi_{p}(z) = 1/\alpha}.
\]
As 
\[
  \Phi_{p}(x) =  \sum_{i=1}^{d} \frac{1}{\lambda_{i} - x},
\]
we see that $\amin{p}$ is less than the least root of $p(x)$.

The following claim is elementary.

\begin{claim}\label{clm:aminMonomial}
For $\alpha > 0$,
\[
  \amin{x^{k}} = - k \alpha .
\]
\end{claim}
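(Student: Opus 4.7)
The plan is to unfold the definition of $\aminName$ directly and compute the roots of the resulting polynomial by inspection. First I would set $p(x) = x^k$ and compute $p'(x) = k x^{k-1}$, so that
\[
p(x) + \alpha p'(x) = x^k + \alpha k x^{k-1} = x^{k-1}(x + k\alpha).
\]
This polynomial is visibly real-rooted, with a root of multiplicity $k-1$ at $0$ and a simple root at $-k\alpha$.

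Next I would identify the least root. Since $\alpha > 0$ by hypothesis, $-k\alpha \le 0$, so the least root is $-k\alpha$ (strictly less than $0$ when $k \ge 1$, and equal to $0$ when $k = 0$, in which case the polynomial is constant and the claim is vacuous or interpreted by convention). By the definition $\amin{p(x)} \defeq \lmin(p(x) + \alpha p'(x))$, this gives $\amin{x^k} = -k\alpha$, as desired.

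There is no real obstacle here; the only thing to be careful about is matching the convention for $\lmin$ when the polynomial has a repeated zero, which is handled by taking the minimum over the multiset of roots. The claim is elementary precisely because the derivative of a monomial is again a monomial, so the shift by $\alpha \partial_x$ factors cleanly.
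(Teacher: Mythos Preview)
Your proof is correct and is exactly the elementary computation the paper has in mind; in fact the paper states the claim without proof, calling it elementary, so your direct factorization $x^{k}+\alpha k x^{k-1}=x^{k-1}(x+k\alpha)$ and identification of the least root is precisely what is needed.
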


\subsection{Interlacing Families}\label{ssec:interlacing}

We use the method of interlacing families of polynomials developed in
  \cite{IF1,IF2}
  to relate the zeros of sums of polynomials to individual polynomials
  in the sum.
The results in Section~\ref{sec:jacobi} will require 
 the following variant of the definition,
  which is more general than the ones we have used previously.
\begin{definition}\label{def:if} An {\em interlacing family} consists of a finite rooted tree
	$T$ and a labeling of the nodes $v \in T$ by monic real-rooted
	polynomials $f_{v} (x)\in\R[x]$, with two properties:
\begin{enumerate}
	\item [a.] Every polynomial $f_{v} (x)$ corresponding to a non-leaf node $v$ is
		a convex combination of the polynomials corresponding to the
		children of $v$.
	\item [b.] For all nodes $v_1,v_2\in T$ with a common parent, all convex
		combinations of $f_{v_1}(x)$ and $f_{v_2}(x)$ are
		real-rooted.\footnote{This condition implies that all convex
			combinations of all the children of a node are
			real-rooted; the equivalence is discussed in
		\cite{ICM14}.}
\end{enumerate}
We say that a set of polynomials is an \textit{interlacing family} if they are the labels of the leaves of such a tree.
\end{definition}
In the applications in this paper, the leaves of the tree will naturally
correspond to elements of a probability space, and the internal nodes will
correspond to conditional expectations of the corresponding polynomials over this probability space.

In Sections \ref{sec:laguerre} and \ref{sec:schatten}, as in \cite{IF1,IF2}, we consider
  interlacing families in which the nodes of the tree at distance $t$ from the root 
  are indexed by sequences
  $s_{1}, \dotsc , s_{t} \in [m]^{t}$.
We denote the empty sequence and the root node of the tree by $\rootnode$.

The leaves of the tree correspond to sequences of length $k$, and each is labeled
  by a polynomial $f_{s_{1}, \dotsc , s_{k}}(x)$.
Each intermediate node is labeled by the average of the polynomials labeling its children. 
So, for $t < k$
\[
  f_{s_{1}, \dotsc , s_{t}} (x)
=
 \frac{1}{m}  \sum_{j=1}^{m}   f_{s_{1}, \dotsc , s_{t}, j} (x)
=
 \frac{1}{m^{k-t}}  \sum_{s_{t+1}, \dotsc , s_{k}} f_{s_{1}, \dotsc , s_{k}} (x), 
\]
and
\[
 \frac{1}{m^{k}}  f_{\rootnode}(x) = \sum_{s_{1}, \dotsc , s_{k} \in [m]^{k}} f_{s_{1}, \dotsc , s_{k}} (x).
\]

A fortunate choice of polynomials labeling the leaves yields an interlacing family. 
\begin{theorem}[Theorem 4.5 of \cite{IF2}]\label{thm:cartesianInterlacing}
Let $u_{1}, \dotsc , u_{m}$ be vectors in $\R^{d}$
  and let
\[
  f_{s_{1}, \dotsc , s_{k}} \defeq  \mydet{x I - \sum_{i = 1}^{k} u_{s_{i}} u_{s_{i}}^{T}}.
\]
Then, these polynomials form an interlacing family.
\end{theorem}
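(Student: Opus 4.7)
The strategy is to verify directly the two conditions of Definition~\ref{def:if}. Condition (a) is immediate from the construction preceding the theorem: the label of any non-leaf node $v = (s_1, \ldots, s_t)$ is defined to be
\[
f_{s_1, \ldots, s_t}(x) = \frac{1}{m} \sum_{j=1}^{m} f_{s_1, \ldots, s_t, j}(x),
\]
which is the uniform (hence convex) combination of the polynomials labeling its $m$ children. So only condition (b) requires work.

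By the footnote to Definition~\ref{def:if}, condition (b) at a non-leaf node $v = (s_1, \ldots, s_t)$ reduces to showing that, for every probability vector $\alpha \in \R^{m}_{\geq 0}$ with $\sum_{j} \alpha_j = 1$, the polynomial
\[
h_\alpha(x) \defeq \sum_{j=1}^{m} \alpha_j \, f_{s_1, \ldots, s_t, j}(x) = \E \, \mydet{x I - \sum_{i=1}^{k} u_{s_i} u_{s_i}^{T}}
\]
is real-rooted, where $s_1, \ldots, s_t$ are fixed at $v$, the index $s_{t+1}$ is drawn from $[m]$ with probabilities $\alpha$, and $s_{t+2}, \ldots, s_k$ are drawn independently and uniformly. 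Applying Lemma~\ref{lem:randomUpdate} once for each of the $k - t$ independent random indices, and introducing the positive semidefinite matrices $C = \sum_{i=1}^{t} u_{s_i} u_{s_i}^{T}$, $D_\alpha = \sum_{j=1}^{m} \alpha_j u_j u_j^{T}$, and $D = \frac{1}{m} \sum_{j=1}^{m} u_j u_j^{T}$, I would rewrite
\[
h_\alpha(x) = \left[\prod_{i=t+1}^{k} \bigl(1 - \partial_{z_i}\bigr)\right] \mydet{x I - C + z_{t+1} D_\alpha + \sum_{i=t+2}^{k} z_i D} \, \bigg|_{z_{t+1} = \cdots = z_k = 0}.
\]

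To conclude, I would invoke real stability. The inner multivariate polynomial $P(x, z_{t+1}, \ldots, z_k)$ has the form $\mydet{x_0 I + \sum_{\ell} x_\ell A_\ell + B}$ with each $A_\ell$ positive semidefinite and $B$ symmetric, and is therefore real stable by the classical determinantal stability theorem. The operators $1 - \partial_{z_i}$ each preserve real stability in the multivariate sense, and so does the specialization of a variable to a real value; applying these closure properties one auxiliary variable at a time reduces $h_\alpha$ to a real stable polynomial in $x$ alone, which (being monic of positive degree) is a real-rooted polynomial. The main obstacle is precisely these two closure properties of real stability, since the rest of the argument is mechanical bookkeeping via Lemma~\ref{lem:randomUpdate}; both facts can either be quoted from the stability-preserver theorems of Borcea and Br\"and\'en, or derived by hand using Hermite--Kakeya--Obreschkoff-style arguments as in \cite{IF1,IF2}.
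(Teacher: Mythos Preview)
The paper does not actually prove this theorem; it is quoted verbatim as Theorem~4.5 of \cite{IF2} and used as a black box. Your sketch is correct and is essentially the argument given in \cite{IF2}: one rewrites any convex combination of the children of a node, via repeated use of Lemma~\ref{lem:randomUpdate}, as a product of $(1-\partial_{z_i})$ operators applied to a determinant $\det\bigl(xI + \sum_i z_i A_i + B\bigr)$ with each $A_i$ positive semidefinite, and then invokes the determinantal real-stability theorem together with the closure of real stability under $(1-\partial_{z_i})$ and under real specialization to conclude real-rootedness in $x$. So your proposal matches the approach of the cited source, and there is nothing in the present paper to compare it against.
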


Interlacing families are useful because they allow us to relate the zeros
  of the polynomial labeling the root to those labeling the leaves.
In particular, we will prove the following slight generalization of
  Theorem 4.4 of \cite{IF1}.

\begin{theorem}\label{thm:kthRoot}
Let $f$ be an interlacing family of degree $d$ polynomials with root
  labeled by $f_{\rootnode}(x)$ and leaves by $\{f_{\ell}(x)\}_{\ell\in L}$.
Then for all indices $1 \leq j \leq n$, there exist leaves $a$ and
$b$ such that
\begin{equation}\label{eqn:kthRoot}
	\lambda_j(f_{a}) \geq \lambda_j(f_\rootnode) \geq
	\lambda_j(f_{b}).
\end{equation}
\end{theorem}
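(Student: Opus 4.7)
My plan is to prove the two inequalities in \eqref{eqn:kthRoot} separately by descending the tree one level at a time, at each step replacing the current node with one of its children that preserves the desired inequality. The induction reduces immediately to the following single-step lemma: if $f$ is a convex combination $f = \sum_i c_i g_i$ of polynomials $g_1,\dotsc,g_r$ (with $c_i\geq 0$, $\sum_i c_i = 1$) whose pairwise convex combinations are all real-rooted, then for every $j$ there exist indices $i_+$ and $i_-$ with
\[
  \lambda_j(g_{i_+}) \geq \lambda_j(f) \geq \lambda_j(g_{i_-}).
\]
Applied at the root this gives a child $v_1$ satisfying $\lambda_j(f_{v_1})\geq \lambda_j(f_\rootnode)$; applied at $v_1$ it gives a grandchild $v_2$ with $\lambda_j(f_{v_2})\geq \lambda_j(f_{v_1})\geq \lambda_j(f_\rootnode)$; and so on down to a leaf $a$ with $\lambda_j(f_a)\geq \lambda_j(f_\rootnode)$. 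The other side proceeds identically to produce a leaf $b$ with $\lambda_j(f_b)\leq \lambda_j(f_\rootnode)$.

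To establish the lemma, I would use the classical equivalence (essentially Obreschkoff's theorem, as recalled in \cite{IF1,IF2}) between pairwise real-rootedness of convex combinations and the existence of a \emph{common interlacer}: a real-rooted polynomial $h$ of degree $d-1$ whose roots weakly separate the roots of each $g_i$. Given such an $h$ with roots $\mu_1 \geq \mu_2 \geq \dotsb \geq \mu_{d-1}$, the $j$th largest root of every $g_i$ lies in the interval $I_j \defeq [\mu_j,\mu_{j-1}]$ (with the conventions $\mu_0 = +\infty$, $\mu_d = -\infty$). Since $f$ is itself a convex combination of the $g_i$, the same common interlacer $h$ interlaces $f$, so $\lambda_j(f)\in I_j$ as well. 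Because a finite set of numbers $\{\lambda_j(g_i)\}_i$ in the interval $I_j$ must contain values on both sides of any other point of $I_j$, in particular of $\lambda_j(f)$, the desired indices $i_+$ and $i_-$ exist.

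The only subtlety I anticipate is the passage from the hypothesis in Definition~\ref{def:if}, which only asks that \emph{pairwise} convex combinations of the children of a node are real-rooted, to the conclusion that \emph{arbitrary} convex combinations are real-rooted (needed because $f_v$ is a convex combination of all the children at once). The paper's footnote indicates this equivalence is standard, and it follows directly from the common-interlacer characterization: pairwise real-rootedness already supplies a common interlacer for the whole family, which in turn makes every convex combination real-rooted. Once this technical point is in hand, the induction along the tree is immediate, and no further computation is required.
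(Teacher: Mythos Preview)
Your overall strategy---descend the tree level by level, at each step invoking a single-step lemma about convex combinations of polynomials with a common interlacer---is exactly the paper's approach (the paper packages the single-step case as Lemma~\ref{lem:interlacingRoots} and then inducts on the height of the tree). However, your argument for the single-step lemma contains a genuine gap. You write that ``a finite set of numbers $\{\lambda_j(g_i)\}_i$ in the interval $I_j$ must contain values on both sides of any other point of $I_j$, in particular of $\lambda_j(f)$.'' As stated this is simply false: nothing prevents all the $\lambda_j(g_i)$ from lying strictly to one side of $\lambda_j(f)$ inside $I_j$. The common interlacer by itself only confines each $\lambda_j(g_i)$ and $\lambda_j(f)$ to the same interval; it does not order them relative to one another.

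The missing ingredient is the sign argument the paper actually uses. At $\beta_j \defeq \lambda_j(f)$ one has $\sum_i c_i\, g_i(\beta_j) = f(\beta_j) = 0$, so (after dividing out any common root, as the paper does) some $g_a(\beta_j)$ is negative and some $g_b(\beta_j)$ is positive. On the interval $I_j$ each $g_i$ has exactly one root and a fixed sign pattern on either side of it (determined by the parity of $j$ and the positive leading coefficient), so the sign of $g_i(\beta_j)$ pins down whether $\lambda_j(g_i)$ lies above or below $\beta_j$. That is what actually produces $i_+$ and $i_-$; once this step is inserted, your descent works and coincides with the paper's proof.
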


To prove this theorem, we first explain why we call these families ``interlacing''.

\begin{definition}\label{def:interlacing}
We say that a polynomial $g(x) = \prod_{i=1}^{d+1} (x - \alpha_{i})$ \emph{interlaces} a polynomial 
  $f(x) = \prod_{i=1}^{d} (x - \beta_{i})$ if
\[
  \alpha_{1} \geq \beta_{1} \geq \alpha_{2} \geq \beta_{2} \geq \alpha_{3} \geq \dotsb \geq
  \alpha_{d}\geq \beta_{d} \geq \alpha_{d+1}. 
\]
We say that polynomials $f_{1}, \dotsc , f_{m}$ have a \emph{common interlacing}
  if there is a single polynomial $g$ that interlaces $f_{i}$ for each $i$.
\end{definition}

The common interlacing assertions in this paper stem from the following fundamental example.
\begin{claim}\label{clm:cauchyInterlacing}
Let $M$ be a $d$ dimensional symmetric matrix and let $u_{1}, \dotsc , u_{k}$ be vectors in $\R^{d}$.
Then the polynomials 
\[
f_{j}(x) = \mydet{xI - M - u_{j}u_{j}^{T}}
\]
have a common interlacing.
\end{claim}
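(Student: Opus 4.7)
The plan is to exhibit an explicit polynomial $g(x)$ of degree $d+1$ that interlaces every $f_j(x)$. Let $\mu_1 \geq \mu_2 \geq \dotsb \geq \mu_d$ denote the eigenvalues of $M$, and for each $j$ let $\nu^{(j)}_1 \geq \dotsb \geq \nu^{(j)}_d$ denote the eigenvalues of $M + u_j u_j^T$, which are precisely the roots of $f_j$. The key fact is the classical Cauchy (eigenvalue) interlacing property for rank-one positive semidefinite updates:
\[
\nu^{(j)}_1 \geq \mu_1 \geq \nu^{(j)}_2 \geq \mu_2 \geq \dotsb \geq \nu^{(j)}_d \geq \mu_d.
\]

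I would derive this directly from Lemma \ref{lem:matrix_det}. Applying it with $A = xI - M$ yields
\[
f_j(x) = \mydet{xI - M - u_j u_j^T} = \mydet{xI - M}\bigl(1 - u_j^T (xI - M)^{-1} u_j\bigr),
\]
and after diagonalizing $M$ this rewrites the ratio $f_j(x)/\mydet{xI - M}$ as $1 - \sum_{i=1}^d \sizeof{\langle v_i, u_j\rangle}^2 / (x - \mu_i)$, where $v_i$ is a unit eigenvector for $\mu_i$. On each open interval $(\mu_{i+1}, \mu_i)$ between consecutive distinct eigenvalues and on $(\mu_1, \infty)$, this rational function is strictly increasing from $-\infty$ to $+\infty$, so it contributes exactly one zero in each such interval. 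These zeros are the $\nu^{(j)}_i$, and their location is exactly the asserted interlacing inequality; repeated eigenvalues can be handled by a standard continuity/perturbation argument.

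With the interlacing inequality in hand, choose any real $C > \max_j \nu^{(j)}_1$ (possible since $j$ ranges over the finite set $\{1, \dotsc, k\}$) and set
\[
g(x) = (x - C)\prod_{i=1}^{d}(x - \mu_i).
\]
Its $d+1$ roots in decreasing order are $\alpha_1 = C$ and $\alpha_{i+1} = \mu_i$ for $i = 1, \dotsc, d$. Comparing with the roots $\beta_i = \nu^{(j)}_i$ of $f_j$: we have $\alpha_1 = C \geq \nu^{(j)}_1 = \beta_1$ by the choice of $C$, and for each $i \geq 2$ the inequality $\alpha_i = \mu_{i-1} \geq \nu^{(j)}_i = \beta_i \geq \mu_i = \alpha_{i+1}$ is exactly the Cauchy interlacing statement. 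Hence $g$ interlaces every $f_j$ in the sense of Definition \ref{def:interlacing}, proving that $\{f_1, \dotsc, f_k\}$ has a common interlacing.

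The only mild subtlety is the degree mismatch: the natural candidate $\mydet{xI - M}$ has only $d$ roots, whereas Definition \ref{def:interlacing} requires the common interlacer to have degree $d+1$. Padding with the factor $(x - C)$ for $C$ sufficiently large resolves this while preserving all required inequalities, so beyond invoking Lemma \ref{lem:matrix_det} and a routine monotonicity argument no real obstacle remains.
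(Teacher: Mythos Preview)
Your proposal is correct and follows essentially the same route as the paper: invoke Cauchy interlacing for the rank-one update $M \mapsto M + u_j u_j^T$ to see that the eigenvalues of $M$ interlace those of every $M + u_j u_j^T$, then pad $\mydet{xI - M}$ by a factor $(x - C)$ with $C$ large to obtain a degree $d+1$ common interlacer. The only difference is that the paper simply cites Cauchy's interlacing theorem, whereas you derive it from Lemma~\ref{lem:matrix_det} via the monotonicity of $1 - \sum_i c_i/(x-\mu_i)$; one small slip is that on $(\mu_1,\infty)$ this function tends to $1$, not $+\infty$, but since $1>0$ the zero-crossing argument is unaffected.
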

\begin{proof}
Let $g_{0}(x) = \mydet{xI - M} = \prod_{i=1}^{d}(x - \alpha_{i})$.
For any $j$, let $f_{j}(x) = \prod_{i=1}^{d}(x - \beta_{i})$.
Cauchy's interlacing theorem tells us that
\[
  \beta_{1} \geq \alpha_{1} \geq \beta_{2} \geq \alpha_{2} \geq \dotsb \geq
  \beta_{d}\geq \alpha_{d}.
\]
So, for sufficiently large $\alpha$, $(x - \alpha) g_{0}(x)$  interlaces each $f_{j}(x)$.
\end{proof}

The connection between interlacing and the real-rootedness of convex combinations is given
  by the following theorem (see \cite{Dedieu}, \cite[Theorem~$2'$]{Fell}, and \cite[Theorem 3.6]{ChudnovskySeymour}.

\begin{theorem}\label{thm:Fisk}
Let $f_{1},\ldots, f_{m}$ be real-rooted (univariate) polynomials of the same degree with
positive leading coefficients. Then $f_{1},\ldots, f_{m}$ have a common interlacing if
and only if $\sum_{i=1}^{m}\mu_{i} f_{i}$ is real rooted for all convex
combinations $\mu_{i} \ge 0, \sum_{i=1}^{m} \mu_{i}=1$.
\end{theorem}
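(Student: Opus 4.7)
I plan to argue that if the $f_i$ share a common interlacer $g$ with roots $\alpha_1 \geq \cdots \geq \alpha_{d+1}$, then the sign of $f_i(\alpha_j)$ depends only on $j$, not on $i$. The interlacing inequalities $\alpha_k \geq \beta_{i,k} \geq \alpha_{k+1}$ force exactly $j-1$ roots of $f_i$ to lie above $\alpha_j$, so $f_i(\alpha_j) = \prod_k (\alpha_j - \beta_{i,k})$ has sign $(-1)^{j-1}$ (or vanishes if $\alpha_j$ is itself a root of $f_i$). Any convex combination then inherits this alternating sign pattern at the $\alpha_j$, and the intermediate value theorem produces a real root in each of the $d$ closed intervals $[\alpha_{j+1}, \alpha_j]$, accounting for all $d$ roots.

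\textbf{Reverse direction, pairwise case.} For the converse, the plan is to handle $m=2$ first and then lift. Assume $p_\mu \defeq \mu f_1 + (1-\mu) f_2$ is real-rooted for every $\mu \in [0,1]$, and let $r_1(\mu) \geq \cdots \geq r_d(\mu)$ denote its ordered roots; these are continuous in $\mu$ since no root escapes the real line, and they join the roots of $f_2$ at $\mu = 0$ to those of $f_1$ at $\mu = 1$. The central claim I would establish is that $\min_\mu r_k(\mu) \geq \max_\mu r_{k+1}(\mu)$ for every $k$: if two adjacent root curves collided at some $\mu_0$ and then separated so as to swap positions, the discriminant of $p_\mu$ would change sign near $\mu_0$, producing a pair of complex conjugate roots and contradicting the hypothesis. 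Any $\gamma_k$ in the resulting nonempty interval then satisfies $\gamma_k \geq \beta_{i, k+1}$ and $\gamma_k \leq \beta_{i,k}$ for $i = 1, 2$, and adjoining one more node outside the range of all roots gives a common interlacer of the correct degree $d+1$.

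\textbf{Lifting and main obstacle.} To extend to arbitrary $m$, I would apply the pairwise conclusion to every pair $(f_i, f_j)$ to obtain $\beta_{i,k+1} \leq \beta_{j,k}$ for all $i, j, k$, whence $\max_i \beta_{i,k+1} \leq \min_i \beta_{i,k}$ uniformly. Selecting a $\gamma_k$ in each of these intervals and adjoining one extra root yields a single interlacer for the whole family at once. I expect the non-crossing claim in the pairwise step to be the main obstacle: formalizing the continuity of ordered roots through collisions and tying their behavior rigorously to the discriminant of $p_\mu$ is the classical Hermite--Kakeya--Obreschkoff argument, and it is the technical core where the real-rootedness hypothesis is actually consumed.
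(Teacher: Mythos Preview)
The paper does not prove this theorem at all: it is quoted with references to Dedieu, Fell, and Chudnovsky--Seymour, and then used as a black box. So there is no ``paper's own proof'' to compare against; your sketch is in fact a reasonable summary of the argument one finds in those references, particularly Fell.

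One caution about the reverse $m=2$ step. Your proposed mechanism---that a failure of $\min_\mu r_k(\mu)\ge \max_\mu r_{k+1}(\mu)$ would force the discriminant of $p_\mu$ to change sign---is not quite the right diagnosis. Ordered root curves can touch and separate again while the discriminant merely has a double zero and stays nonnegative; what you actually want is the observation that $p_\mu(x)$ is \emph{linear} in $\mu$, so (after factoring out any common roots of $f_1,f_2$) each real $x$ is a root of $p_\mu$ for at most one $\mu$. Combined with the fact that every root of $p_\mu$ for $\mu\in(0,1)$ lies in the set $\{x:f_1(x)f_2(x)\le 0\}$, this pins each continuous branch $r_k(\cdot)$ to a single component of that set, and those $d$ components are ordered on the line. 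That is the content of the Hermite--Kakeya--Obreschkoff/Fell argument you invoke, and once stated this way the inequality $\max_i\beta_{i,k+1}\le\min_i\beta_{i,k}$ drops out immediately. Your lift from $m=2$ to general $m$ via pairwise interlacing is correct as stated.
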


Theorem \ref{thm:kthRoot} follows from an inductive application of the following lemma, which
  generalizes the case $k=1$ which was proven in \cite{IF1,IF2}.
\begin{lemma}\label{lem:interlacingRoots}
Let $f_{1}, \dotsc , f_{m}$ be real-rooted degree $d$ polynomials that have a common interlacing.
Then for every index $1 \leq j \leq d$ and
  for every nonnegative $\mu_{1}, \dotsc , \mu_{m}$ such that
  $\sum_{i=1}^{m} \mu_{i}=1$,
there exist an $a$ and a $b$ so that
\[
  \lambda_{j}(f_{a})
\geq 
  \lambda_{j} \left(\sum_{i} \mu_{i} f_{i} \right)
\geq 
  \lambda_{j} (f_{b}).
\]
\end{lemma}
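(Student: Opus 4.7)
The plan is to select a common interlacer $g$ of $f_1,\dots,f_m$ and use a sign count at $\gamma_j := \lambda_j(h)$, where $h := \sum_i \mu_i f_i$, to produce the indices $a$ and $b$. To avoid degeneracies, I would first argue that one may take $g$ with simple roots $\alpha_1 > \cdots > \alpha_{d+1}$ strictly separated from every root of every $f_i$. Such a $g$ can be obtained by perturbing an initial common interlacer, shifting its roots by small distinct amounts while preserving the interlacing inequalities $\alpha_k \ge \beta_k^{(i)} \ge \alpha_{k+1}$; the general statement then follows by taking limits, using that the ordered roots of a real-rooted polynomial depend continuously on its coefficients. Under this reduction we have $\lambda_j(f_i) \in (\alpha_{j+1},\alpha_j)$ for every $i$.

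Next I would locate $\gamma_j$ in the same interval. By Theorem~\ref{thm:Fisk} the convex combination $h$ is real-rooted, and counting negative factors in $f_i(\alpha_k) = \prod_{l=1}^{d}(\alpha_k - \beta_l^{(i)})$ yields $\mathrm{sign}\, f_i(\alpha_k) = (-1)^{k-1}$ for all $i,k$; the same alternation is inherited by $h = \sum_i \mu_i f_i$. Since $h$ has degree $d$ and changes sign at the $d+1$ points $\alpha_1 > \cdots > \alpha_{d+1}$, it has exactly one root in each interval $(\alpha_{k+1},\alpha_k)$, and in particular $\gamma_j \in (\alpha_{j+1},\alpha_j)$.

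The key step is the following sign identity. For any $\gamma \in (\alpha_{j+1},\alpha_j)$, strict interlacing forces each factor $\gamma - \beta_k^{(i)}$ of $f_i(\gamma)$ with $k \neq j$ to have a fixed sign (negative for $k < j$, positive for $k > j$), so
\[
\mathrm{sign}\bigl((-1)^{j} f_i(\gamma)\bigr) \;=\; \mathrm{sign}\bigl(\lambda_j(f_i) - \gamma\bigr).
\]
Applying this at $\gamma = \gamma_j$, the identity $0 = (-1)^j h(\gamma_j) = \sum_i \mu_i \cdot (-1)^j f_i(\gamma_j)$ expresses $0$ as a nonnegative convex combination of real numbers. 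Either all summands vanish, in which case $\lambda_j(f_i) = \gamma_j$ for every $i$ with $\mu_i > 0$ and any such $i$ serves as both $a$ and $b$; or some summand is strictly positive, supplying an index $a$ with $\lambda_j(f_a) > \gamma_j$, and some other is strictly negative, supplying an index $b$ with $\lambda_j(f_b) < \gamma_j$.

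The main obstacle is the perturbation step: one must verify that a common interlacer with simple roots that are strictly separated from the roots of the $f_i$ always exists, and that the conclusion can be transferred back by a limit. This is routine but easy to mishandle, especially if roots of different $f_i$ accumulate at a common boundary of the chosen interlacer. Once this is in hand, the algebraic core---the sign identity together with the pigeonhole on signs of $\mu_i (-1)^j f_i(\gamma_j)$---goes through without further calculation.
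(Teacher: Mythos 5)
Your algebraic core — the sign identity $\mathrm{sign}\bigl((-1)^{j} f_i(\gamma)\bigr)=\mathrm{sign}\bigl(\lambda_j(f_i)-\gamma\bigr)$ for $\gamma\in(\alpha_{j+1},\alpha_j)$, followed by pigeonhole on the signs of $\mu_i(-1)^j f_i(\gamma_j)$ — is correct and is really a tidy repackaging of the paper's argument (the paper reasons directly with the signs of $f_i(\beta_j)$ and $f_i(\alpha_j),f_i(\alpha_{j+1})$). But the reduction you call ``routine'' is the step that actually breaks, and it breaks for a concrete reason: \emph{perturbing $g$ alone cannot, in general, produce a common interlacer whose roots are strictly separated from the roots of every $f_i$.} Take $f_1=(x-1)(x-3)$ and $f_2=(x-3)(x-5)$. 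Any common interlacer $g=(x-\alpha_1)(x-\alpha_2)(x-\alpha_3)$ must satisfy $\alpha_2\le 3$ (from $f_1$) and $\alpha_2\ge 3$ (from $f_2$), so $\alpha_2=3$ is forced and no perturbation of $g$ moves it off the shared root. To salvage the reduction you would have to perturb the $f_i$ themselves, in carefully chosen directions, while preserving the existence of a common interlacer — and then take a limit along a subsequence where the selected indices $a,b$ are constant. That can be done, but it is not ``routine,'' and the direction-of-perturbation bookkeeping is exactly where such arguments go wrong.

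The paper avoids all of this with a cleaner reduction that you should adopt: restrict attention to the $f_i$ with $\mu_i>0$, divide out their greatest common divisor $f_0$, apply the argument to $\hat f_i=f_i/f_0$, and observe that multiplying back by $f_0$ adds the same roots to every polynomial including the convex combination. After dividing out the GCD, the $\hat f_i$ have no common zero, which already forces $\alpha_1>\cdots>\alpha_{d+1}$ (a repeated root of $g$ would be a common zero of all $\hat f_i$) and forces $h(\alpha_k)\ne 0$ for every $k$, hence $\gamma_j\in(\alpha_{j+1},\alpha_j)$ strictly. That is \emph{all} your sign identity actually needs: the factors with $k<j$ satisfy $\beta_k^{(i)}\ge\alpha_{k+1}\ge\alpha_j>\gamma_j$ and those with $k>j$ satisfy $\beta_k^{(i)}\le\alpha_k\le\alpha_{j+1}<\gamma_j$, using only the weak interlacing inequalities. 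In other words, the ``strict separation from every root of every $f_i$'' you tried to manufacture is both unachievable by perturbing $g$ and unnecessary; distinctness of the $\alpha_k$ suffices, and the GCD trick delivers it for free.
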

\begin{proof}
By restricting our attention to the polynomials $f_{i}$ for which $\mu_{i}$ is positive,
  we may assume without loss of generality that each $\mu_{i}$ is positive for every $i$.
Define
\[
  f_{\rootnode}(x) = \sum_{i} \mu_{i} f_{i} (x)
\]
and let
\[
  f_{\rootnode}(x) = \prod_{i=1}^{d} (x - \beta_{i}).
\]
We seek $a$ and $b$ for which $\lambda_{j}(f_{a}) \geq \beta_{j} \geq \lambda_{j}(f_{b})$.

Let 
\[
g(x) = \prod_{i=1}^{d+1} (x - \alpha_{i})
\]
be  a polynomial that interlaces every $f_{i}$.
As each $f_{i}$ has a positive leading coefficient, we know that
  $f_{i}(\alpha_{k})$ is at least $0$ for $k$ odd and at most $0$ 
  for $k$ even.

We first consider the case in which $f_{1}, \dotsc , f_{m}$ do not have any zero in common.
In this case, $\alpha_{1} > \alpha_{2} > \dotsb  > \alpha_{d+1}$: if some $\alpha_{k} = \alpha_{k+1}$
  then $f_{i}(\alpha_{k}) = 0$ for all $i$.
Moreover, there must be some $i$ for which $f_{i}(\alpha_{k})$ is nonzero.
As all the $\mu_{i}$ are positive,
  $f_{\rootnode}(\alpha_{k})$ is positive for $k$ odd and negative for $k$ even.

As there must be some $i$ for which $f_{i}(\beta_{j}) \not = 0$, there must be an
  $a$  for which $f_{a}(\beta_{j}) < 0$ and a $b$ for which $f_{b}(\beta_{j}) > 0$.
We now show that if $j$ is odd, then
  $\lambda_{j}(f_{a}) \geq \beta_{j} \geq \lambda_{j}(f_{b})$.
As  $f_{a}(\alpha_{j})$ is nonnegative,
  $f_{a}$ must have a zero between $\beta_{j}$ and $\alpha_{j}$.
As $f_{a}$ interlaces $g$, this is the $j$th largest zero of $f_{a}$.
Similarly, the nonpositivity of $f_{b}(\alpha_{j+1})$ implies that
  $f_{b}$ has a zero between $\alpha_{j+1}$ and $\beta_{j}$.
This must be the $j$th largest zero of $f_{b}$.

The case of even $j$ is symmetric, except that we
  reverse the choice of $a$ and $b$.

We finish by observing that it suffices to consider the case in which
   $f_{1}, \dotsc , f_{m}$ do not have any zero in common.
If they do, we let $f_{0}(x)$ be their greatest common divisor,
  define $\hat{f}_{i}(x) = f_{i}(x) / f_{0}(x)$, and observe
  that $\hat{f}_{1}, \dotsc , \hat{f}_{m}$ do not have any common zeros.
Thus, we may apply the above argument to these polynomials.
As multiplying all the polynomials by $f_{0}(x)$ adds the same zeros to
  for $f_{1}, \dotsc , f_{m}$ and $f_{\rootnode}$, the theorem holds
  for these as well.
\end{proof}

\begin{proof}[Proof of Theorem \ref{thm:kthRoot}]
For every node $v$ in the tree defining an interlacing family,
  the subtree rooted at $v$ and the polynomials on the nodes of that tree
  form an interlacing family of their own.
Thus, we may prove the theorem by induction on the height of the tree.
Lemma \ref{lem:interlacingRoots} handles trees of height $1$.

For trees of greater height, Lemma \ref{lem:interlacingRoots} tells us that
  there are children of the root $v_{\hat{a}}$ and $v_{\hat{b}}$ that satisfy
  \eqref{eqn:kthRoot}.
If $v_{\hat{a}}$ is not a leaf, then it is the root of its own interlacing family
  and Lemma \ref{lem:interlacingRoots} tells this family has a leaf node $v_{a}$
  for which
\[
  \lambda_{j}(v_{a})  \geq    \lambda_{j}(v_{\hat{a}})  \geq \lambda_{j}(f_{\rootnode}).
\]
The same holds for $v_{\hat{b}}$.
\end{proof}

\section{The Isotropic Case with Replacement: Laguerre Polynomials}
\label{sec:laguerre}

We now prove Theorem \ref{thm:ss} in the isotropic case.
Let the columns of $B$ be the vectors $u_{1}, \dotsc , u_{m} \in \R^{d}$.
The condition $B B^{T} = I_{d}$ is equivalent to $\sum_{i} u_{i} u_{i}^{T} = I_{d}$.
For a set $S$ of size $k < d$,
\[
 \smin (B_{S})^{2} = \lambda_{k}(B_{S}^{T} B_{S})
 = \lambda_{k} (B_{S} B_{S}^{T})
 = \lambda_{k} \left(\sum_{i\in S} u_{i} u_{i}^{T} \right).
\]

We now consider the expected characteristic polynomial of the sum of the outer
  products of $k$ of these vectors chosen uniformly at random, with replacement.
We indicate one such polynomial by a vectors of indices such as
  $(s_{1}, \dotsc , s_{k}) \in [m]^{k}$,
  where we recall $[m] = \setof{1, \dotsc, m}$.
These are the leaves of the tree in the interlacing family:
\[
  f_{s_{1}, \dotsc , s_{k}}(x) \defeq  \mydet{x I - \sum_{i = 1}^{k} u_{s_{i}} u_{s_{i}}^{T}}.
\]
As in Section \ref{ssec:interlacing}, the intermediate nodes in the tree are labeled by
  subsequences of this form, and the polynomial at the root of the tree is
\[
  f_{\rootnode}(x) = \frac{1}{m^{k}} \sum_{s_{1}, \dotsc , s_{k} \in [m]^{k}}   f_{s_{1}, \dotsc , s_{k}}(x).
\]

We now derive a formula for $f_{\rootnode}(x)$.

\begin{lemma}
\label{lem:laguerreFormula}
\[
f_{\rootnode}(x)
= \left(1 - \frac{1}{m} \deriv \right)^{k} x^{d}.
\]
\end{lemma}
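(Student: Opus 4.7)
The plan is to prove the formula by induction on $k$, peeling off one factor of $(1 - \frac{1}{m}\deriv)$ at a time by averaging over the last index $s_k$ using Lemma~\ref{lem:randomUpdate}. Define the partial averages
\[
g_t(x) \defeq \frac{1}{m^t} \sum_{s_1,\dotsc,s_t \in [m]^t} \mydet{xI - \sum_{i=1}^t u_{s_i} u_{s_i}^T},
\]
so that $g_0(x) = x^d$ and $g_k(x) = f_{\rootnode}(x)$. I will show the recursion
\[
g_{t+1}(x) = \left(1 - \frac{1}{m}\deriv\right) g_t(x),
\]
from which the lemma follows by iterating $k$ times starting from $x^d$.

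To establish this recursion, fix the indices $s_1,\dotsc,s_t$, write $A = xI - \sum_{i=1}^{t} u_{s_i} u_{s_i}^T$, and let $\X$ be the random vector that is uniform on $\{u_1,\dotsc,u_m\}$. The isotropy assumption $\sum_i u_i u_i^T = I_d$ gives $\E \X\X^T = \frac{1}{m} I_d$, so Lemma~\ref{lem:randomUpdate} yields
\[
\frac{1}{m}\sum_{j=1}^{m} \mydet{A - u_j u_j^T} \;=\; \E \mydet{A - \X\X^T} \;=\; (1 - \partial_s) \mydet{A + \tfrac{s}{m} I_d}\Big|_{s=0}.
\]

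The key simplification is that adding $\frac{s}{m} I_d$ to $A$ is the same as shifting the argument of the characteristic polynomial: writing $h(y) \defeq \mydet{yI - \sum_{i=1}^{t} u_{s_i} u_{s_i}^T}$ we have $\mydet{A + \frac{s}{m}I_d} = h(x + s/m)$. Since $\partial_s h(x+s/m)\big|_{s=0} = \frac{1}{m} h'(x)$, the right-hand side above equals $h(x) - \frac{1}{m}h'(x) = (1 - \frac{1}{m}\deriv) h(x)$. Averaging this identity over the choice of $s_1,\dotsc,s_t$ and interchanging the (linear) operator $1 - \frac{1}{m}\deriv$ with the average gives $g_{t+1}(x) = (1 - \frac{1}{m}\deriv) g_t(x)$, completing the induction.

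I do not anticipate a real obstacle: everything reduces to correctly identifying $\E\X\X^T$ under the isotropic hypothesis and recognizing the shift in the argument of $h$. The only mildly subtle point is the notational clash between the dummy variable in Lemma~\ref{lem:randomUpdate} and the outer variable $x$, which I handle by renaming the former to $s$.
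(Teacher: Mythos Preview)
Your proof is correct and follows essentially the same inductive strategy as the paper: both define the partial averages $g_t$ and establish the one-step recursion $g_{t+1} = (1 - \tfrac{1}{m}\deriv) g_t$. The only difference is cosmetic: the paper carries out the inductive step directly via Lemma~\ref{lem:matrix_det} and Lemma~\ref{lem:matrix_deriv}, whereas you invoke their packaged consequence Lemma~\ref{lem:randomUpdate} and then use the pleasant observation that $\E\X\X^T = \tfrac{1}{m}I_d$ turns the perturbation into a shift of the argument of $h$.
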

\begin{proof}
For every $0 \leq t \leq k$, define
\[
  g_{t} = \frac{1}{m^{t}} \sum_{s_{1}, \dotsc , s_{t} \in [m]^{t}}   
  \mydet{ x I - \sum_{i = 1}^{k} u_{s_{i}} u_{s_{i}}^{T}} 
\]
We will prove by induction on $t$ that
\[
g_{t} = \left(1 - \frac{1}{m} \deriv \right)^{t} x^{d}.
\]
The base case of $t = 0$ is trivial.
To establish the induction, we use Lemma \ref{lem:matrix_det},
   the identity $\sum_{j} u_{j} u_{j}^{T} = I$, and Lemma \ref{lem:matrix_deriv}  
   to compute
\begin{align*}
g_{t+1}(x) 
&= \frac{1}{m^{t+1}} \sum_{s_1, \dots, s_t} \sum_j \mydet{xI - \sum_{i = 1}^t u_{s_i} u_{s_i}^T - u_ju_j^T}
\\&= \frac{1}{m^{t+1}} \sum_{s_1, \dots, s_t} \sum_j \mydet{xI - \sum_{i = 1}^t u_{s_i} u_{s_i}^T}\left(1 -  \Tr{\left(xI - \sum_{i = 1}^t u_{s_i} u_{s_i}^T\right)^{-1} u_ju_j^T} \right)
\\&= \frac{1}{m^{t+1}} \sum_{s_1, \dots, s_t} \mydet{xI - \sum_{i = 1}^t u_{s_i} u_{s_i}^T}\left(m -  \Tr{\left(xI - \sum_{i = 1}^t u_{s_i} u_{s_i}^T\right)^{-1} I} \right)
\\&= g_{t} (x) - \frac{1}{m^{t+1}} \sum_{s_1, \dots, s_t} \mydet{xI - \sum_{i = 1}^t u_{s_i} u_{s_i}^T}\Tr{\left(xI - \sum_{i = 1}^t u_{s_i} u_{s_i}^T\right)^{-1} }
\\&= g_{t} - \frac{1}{m^{t+1}} \sum_{s_1, \dots, s_t} \deriv  \mydet{xI - \sum_{i = 1}^t u_{s_i} u_{s_i}^T} 
\\&= g_{t}(x) - \frac{1}{m} \deriv g_{t}(x)
\\&= (1 - \frac{1}{m} \deriv ) g_{t}(x).
\end{align*}
\end{proof}

For $d \geq k$ the polynomial $f_{\rootnode}(x)$ 
  is divisible by $x^{d-k}$.
So, the $k$th largest root of $f_{\rootnode}(x)$
  is equal to the smallest root of
\[
  x^{-(d-k)} f_{\rootnode}(x) =
  x^{-(d-k)} \left(1 - \frac{1}{m} \deriv \right)^{k} x^{d}.
\]

To bound the smallest root of this polynomial, we observe that it is a slight transformation
  of an associated Laguerre polynomial.
We use the definition of the associated Laguerre polynomial of degree $n$ and parameter $\alpha$, 
  $L_{n}^{(\alpha)}$, given
  by Rodrigues' formula \cite[(5.1.5)]{Szego}
\[
  L_{n}^{(\alpha )}(x) = 
  e^{x} x^{-\alpha} \frac{1}{n!} \deriv^{n} \left(e^{-x} x^{n+\alpha} \right) =
\frac{x^{-\alpha}}{n!} (\deriv -1)^{n} x^{n + \alpha}.
\]
Thus,
\[
  x^{-(d-k)} f_{\rootnode}(x) = (-1)^{k} \frac{k!}{m^{k}} L_{k}^{(d-k )}(m x).
\]
We now employ a lower bound on the smallest root of associated Laguerre polynomials
  due to Krasikov \cite[Theorem 1]{KrasikovAnsatz}.

\begin{theorem}\label{thm:krasikovLaguerre}
For $\alpha > -1$,
\[
  \lambda_{k} (L_{k}^{(\alpha)}(x)) 
  \geq 
V^{2} + 3 V^{4/3}(U^{2} - V^{2})^{-1/3},
\]
where 
$V = \sqrt{k + \alpha + 1} - \sqrt{k}$ and $U = \sqrt{k + \alpha + 1} + \sqrt{k}$.
\end{theorem}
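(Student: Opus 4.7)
The plan is to prove this bound using Krasikov's ansatz method for extreme zeros of classical orthogonal polynomials. The starting observation is that $y(x) = L_k^{(\alpha)}(x)$ satisfies the Laguerre differential equation
\[
xy''(x) + (\alpha + 1 - x) y'(x) + k y(x) = 0,
\]
so any expression involving $y$, $y'$, and $y''$ may be reduced, modulo this equation, to one involving only $y$, $y'$, and rational functions of $x$. The inequality will come from comparing Laguerre's equation to a constant-coefficient model via a monotonicity argument.

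First I would introduce an auxiliary function of the Sonin--Butlewski form
\[
g(x) = A(x)\, y(x)^2 + B(x)\, y(x) y'(x) + C(x)\, y'(x)^2,
\]
with $A, B, C$ to be chosen as rational functions of small degree. Using the ODE to eliminate $y''$, the derivative $g'(x)$ becomes a quadratic form in $(y, y')$ whose coefficients are rational in $x$. I would impose that the cross term $yy'$ in $g'(x)$ vanishes identically, leaving
\[
g'(x) = D(x)\, y(x)^2 + E(x)\, y'(x)^2
\]
for some rational $D$ and $E$. The payoff of this reduction is that when $D$ and $E$ share a common sign on an interval, $g$ is monotone there, and this forces the zeros of $y$ and $y'$ inside that interval to obey a strict constraint.

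Next I would solve the algebraic conditions for $A, B, C$ so that $D$ and $E$ are simultaneously of one sign on a left neighborhood $(0, V^2)$ of the expected left endpoint of the oscillation region. The natural ansatz is to take $A, B, C$ polynomial in $x$ tuned so that the discriminant polynomial arising in the reduction vanishes at $V^2$ and $U^2$; Sturm-type heuristics for the Liouville-transformed equation $u'' + Q(x) u = 0$ (with $u = x^{(\alpha+1)/2} e^{-x/2} y$) suggest that $(x - V^2)(x - U^2)$ will appear naturally as the denominator of $Q$ minus a constant. With the $A, B, C$ fixed, the resulting monotonicity of $g$ on $(0, V^2)$, together with the value of $g$ at $x = 0$ and at zeros of $y$, would preclude a zero of $y$ in that region, producing the leading lower bound $V^2$.

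The main obstacle, and the most delicate part, is extracting the correction $3 V^{4/3}(U^2 - V^2)^{-1/3}$. I would get this through a perturbative refinement: instead of using the exact threshold $V^2$, I would shift to $V^2 + \delta$ and determine the largest $\delta$ for which the sign condition on $D$ and $E$ persists. Optimizing $\delta$ against the degradation of $Q(x)$ near the turning point $V^2$ naturally introduces a fractional power of $U^2 - V^2$, and matching the exponent $4/3$ requires carefully choosing the order of the perturbation inside the ansatz. This is the step where I would expect to have to appeal to the calculation in Krasikov's paper rather than reconstruct it independently, since the constant $3$ and the exponent $4/3$ come out of an explicit optimization that is not forced by the heuristic picture alone.
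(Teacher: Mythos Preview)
The paper does not prove this theorem at all: it is quoted verbatim as \cite[Theorem 1]{KrasikovAnsatz} and used as a black box to deduce Corollary~\ref{cor:lagBound}. There is therefore no ``paper's own proof'' to compare against.

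Your sketch is a faithful outline of Krasikov's method---the Sonin--Butlewski-type auxiliary function, elimination of $y''$ via the Laguerre ODE, the sign analysis of the resulting quadratic form, and the perturbative refinement near the turning point---but you yourself acknowledge that the decisive step (extracting the constant $3$ and the exponent $4/3$) would require you to ``appeal to the calculation in Krasikov's paper rather than reconstruct it independently.'' That is exactly what the present paper does: it appeals to Krasikov and moves on. So your proposal is not wrong, but it is also not a self-contained proof, and in the context of this paper none is expected.
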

\begin{corollary}\label{cor:lagBound}
\[
 \lambda_{k}(f_{\rootnode} (x) ) > \frac{1}{m}(\sqrt{d} - \sqrt{k})^{2}.
\]
\end{corollary}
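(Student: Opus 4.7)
The plan is to apply Krasikov's bound (Theorem \ref{thm:krasikovLaguerre}) directly to the Laguerre representation derived in the preceding paragraphs. Recall that we have just established
\[
x^{-(d-k)} f_{\rootnode}(x) = (-1)^{k} \frac{k!}{m^{k}} L_{k}^{(d-k)}(m x),
\]
so the $k$ nonzero roots of $f_{\rootnode}$ are exactly the roots of $L_{k}^{(d-k)}$ rescaled by $1/m$. In particular $\lambda_k(f_\rootnode)$, the smallest nonzero root, equals $\lambda_k(L_k^{(d-k)}(x))/m$.

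Next I would instantiate Theorem \ref{thm:krasikovLaguerre} with $\alpha = d-k$, which gives $V = \sqrt{d+1}-\sqrt{k}$ and $U = \sqrt{d+1}+\sqrt{k}$, and yields
\[
\lambda_k\bigl(L_k^{(d-k)}(x)\bigr) \;\geq\; V^2 + 3 V^{4/3}(U^2 - V^2)^{-1/3}.
\]
Since $k \geq 1$ we have $U > V > 0$, so the correction term $3V^{4/3}(U^2-V^2)^{-1/3}$ is strictly positive, and thus $\lambda_k(L_k^{(d-k)}(x)) > V^2$.

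Finally, I would observe that $V = \sqrt{d+1}-\sqrt{k} > \sqrt{d}-\sqrt{k}$, so $V^2 > (\sqrt{d}-\sqrt{k})^2$. Dividing by $m$ gives
\[
\lambda_k(f_\rootnode(x)) = \frac{\lambda_k(L_k^{(d-k)}(x))}{m} > \frac{(\sqrt{d}-\sqrt{k})^2}{m},
\]
as claimed. There is no real obstacle here; the entire argument is bookkeeping around the Rodrigues-formula identification and a one-line monotonicity of $V^2$ in the parameter $d$. The only subtlety worth flagging is that Krasikov's theorem requires $\alpha > -1$, which is satisfied because $d \geq k$ ensures $d-k \geq 0$.
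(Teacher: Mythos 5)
Your proposal is correct and follows essentially the same route as the paper: identify $f_{\rootnode}$ with the associated Laguerre polynomial $L_k^{(d-k)}(mx)$, invoke Krasikov's lower bound with $\alpha = d-k$ so that $V = \sqrt{d+1}-\sqrt{k}$, and then observe $V^2 > (\sqrt{d}-\sqrt{k})^2$ before rescaling by $1/m$. The only cosmetic difference is that you obtain strictness both from the positivity of the correction term $3V^{4/3}(U^2-V^2)^{-1/3}$ and from $\sqrt{d+1}>\sqrt{d}$, whereas the paper relies only on the latter; both are valid.
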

\begin{proof}
Applying Theorem \ref{thm:krasikovLaguerre}
  with $\alpha = d-k$ and thus
\[
  V = (\sqrt{d+1} - \sqrt{k})
\]
gives
\[
 \lambda_{k}(f_{\rootnode} (x) ) 
  =  \lambda_{k}(L_{k}^{(d-k)}(m x))
  = \frac{1}{m} \lambda_{k}(L_{k}^{(d-k)}(x))
  \geq V^{2}
  > \frac{1}{m}  (\sqrt{d} - \sqrt{k})^{2}.
\]
\end{proof}

Lemma \ref{lem:laguerreFormula} and Corollary \ref{cor:lagBound} are all one needs to
  establish Theorem \ref{thm:ss} in the isotropic case.
Theorems \ref{thm:cartesianInterlacing} and \ref{thm:kthRoot} 
  tell us that there exists a sequence $s_{1}, \dotsc , s_{k} \in [m]^{k}$ for which
\[
   \lambda_{k}(f_{s_{1}, \dotsc , s_{k}}) \geq 
  \lambda_{k}(f_{\rootnode}) > 
   \frac{ (\sqrt{d} - \sqrt{k})^{2}}{m} . 
\]
As $f_{s_{1}, \dotsc , s_{k}}$ is the characteristic polynomial
  of $\sum_{1 \leq i \leq k} u_{s_{i}} u_{s_{i}}^{T}$,
  this sequence must consist of distinct elements.
If not, then the matrix in the sum would have rank at most $k-1$
  and thus $\lambda_{k} = 0$.
So, we conclude that there exists a set $S \subset [m]$ of size $k$
  for which
\[
\smin(B_S)^2 =
  \lambda_{k} \left(\sum_{i \in S} u_{i} u_{i}^{T} \right)
>
   \frac{ (\sqrt{d} - \sqrt{k})^{2}}{m} 
= 
  \left(1 - \sqrt{\frac{k}{d} } \right)^{2}
  \frac{d}{m}.
\]


\section{The Nonisotropic Case and the Schatten $4-$norm}\label{sec:schatten}
In this section we prove the promised strengthening of
  Theorem \ref{thm:ss} in terms of the Schatten 4-norm.
In the proof it will be more 
  natural to work with eigenvalues of $BB^T$ rather than singular values of $B$
  (and its submatrices).
For a symmetric matrix $A$, we define
\begin{equation}\label{eqn:kappa}
  \kappa_{A} \defeq \frac{\tr(A)^2}{\tr(A^2)}.
\end{equation}
With this definition and the change of notation $A=BB^T=\sum_{i} u_i u_i^T$
  the theorem may be
  stated as follows.
\begin{theorem} \label{thm:nonisotropic} Suppose $u_1,\ldots,u_m$ are vectors with $\sum_{i=1}^m
	u_{i} u_{i}^{T} = A$. Then for every integer $k \le \kappa_A$,
there exists a set $S \subset [m]$ of size $k$ with
\begin{equation} \lambda_k\left(\sum_{i \in S} u_{i} u_{i}^{T} \right)\geq
\left(1-\sqrt{\frac{k}{\kappa_A}}\right)^2\frac{\tr(A)}{m}.\end{equation}
\end{theorem}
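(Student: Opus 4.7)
The plan is to parallel the three-part argument of Section~\ref{sec:laguerre}: build an interlacing family, compute the expected characteristic polynomial at the root of the tree, and lower bound its $k$th largest zero. Only the last step is substantively different in the nonisotropic setting.

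For the interlacing family, I would sample $s_1,\ldots,s_k\in[m]$ iid uniformly and set $f_{s_1,\ldots,s_k}(x) = \det(xI - \sum_{i=1}^k u_{s_i}u_{s_i}^T)$. Theorems~\ref{thm:cartesianInterlacing} and~\ref{thm:kthRoot} produce a leaf whose $k$th largest zero is at least $\lambda_k(f_\emptyset)$; once that zero is shown to be positive (by Step~3), the leaf's indices must be distinct, and they supply the desired set $S$.

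For the expected polynomial, I would rerun the induction of Lemma~\ref{lem:laguerreFormula}. The only change is that now $\sum_j u_j u_j^T = A$ rather than $I$, so the trace produced at each step is $\tr((xI-M_t)^{-1} A)$ with $M_t = \sum_{i=1}^t u_{s_i}u_{s_i}^T$, not $\tr((xI-M_t)^{-1})$. Jacobi's formula (Lemma~\ref{lem:matrix_deriv}) recognizes this as $\partial_y\det(xI-M_t+yA)|_{y=0}$, which I absorb by carrying an auxiliary variable $y$ through the induction. The outcome is the analog
\[
f_\emptyset(x) = \left[\left(1 - \tfrac{1}{m}\partial_y\right)^k \det(xI+yA)\right]_{y=0} = \sum_{t=0}^{k} (-1)^t \binom{k}{t} \frac{t!}{m^t} e_t(A)\, x^{d-t},
\]
which recovers Lemma~\ref{lem:laguerreFormula} when $A = I$.

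The main obstacle is lower-bounding $\lambda_k(f_\emptyset)$ by $(1-\sqrt{k/\kappa_A})^2\tr(A)/m$. Because $f_\emptyset$ depends on every $e_t(A)$ rather than just on $\tr(A)$, it is not a Laguerre polynomial and Krasikov's bound does not apply. My plan is to use the $\aminName$/barrier-function machinery of Section~\ref{ssec:interlacing}. The bivariate polynomial $h_t(x,y) := (1-\tfrac{1}{m}\partial_y)^t \det(xI+yA)$ is real stable (each factor $x+y\lambda_j$ is stable and $1-c\partial_y$ preserves stability by Hermite--Poulain), so the $y$-barrier $\Phi_t := -\partial_y h_t/h_t$ is well defined, and the one-step recursion
\[
\Phi_{t+1} = \Phi_t - \frac{\partial_y\Phi_t}{m + \Phi_t}
\]
follows from direct differentiation of $h_{t+1}=(1-\tfrac{1}{m}\partial_y)h_t$. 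The identities $\partial_y^j h_0(x,y)|_{y=0} = j!\,e_j(A)\,x^{d-j}$ express the initial data through $e_j(A)$; in particular $\partial_y\Phi_0(x,0)/\Phi_0(x,0)^2 = \tr(A^2)/\tr(A)^2 = 1/\kappa_A$, which is the identity that forces $\kappa_A$ to be the natural scale of the problem. The hard part is to identify a scalar potential (built from $\Phi_t$ and a few of its $y$-derivatives, in the spirit of the potentials used in~\cite{BatsonSpielmanSrivastava,IF2,MSSfinite}) whose one-step increment is controlled by $1/m$ and whose telescoped bound over $k$ steps, combined with a monotonicity argument sweeping $x$ along the positive real axis, certifies that $h_k(\cdot,0)$ has no zeros in $\bigl(0,(1-\sqrt{k/\kappa_A})^2\tr(A)/m\bigr]$. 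Plugging this back into Step~1 yields the set $S$ promised by the theorem.
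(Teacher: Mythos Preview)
Your Steps 1 and 2 are fine and match the paper. The gap is in Step 3, where you have the right instinct (a barrier/$\aminName$ argument) but propose to run it in the wrong variable and leave the crucial step unspecified.

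Concretely: the paper does not work with the bivariate $h_t(x,y)$ and a $y$-barrier. Instead it observes (Lemma~\ref{lem:diffformula}) that your coefficient formula $\sum_{t=0}^{k}(-1)^t\binom{k}{t}\frac{t!}{m^t}e_t(A)x^{d-t}$ can be rewritten purely in the $x$-variable as
\[
f_\emptyset(x)=x^{d-k}\prod_{i=1}^{d}\bigl(1-\lambda_i\partial_x\bigr)\,x^{k},
\]
where $\lambda_1,\ldots,\lambda_d$ are the eigenvalues of $M=A/m$. This product form is the step you are missing; once you have it, the problem becomes a one-variable barrier computation. The paper then proves (Lemma~\ref{lem:barriershift}) the single-operator estimate
\[
\amin{(1-\lambda\partial_x)p}\ \ge\ \amin{p}+\frac{1}{1/\lambda+1/\alpha},
\]
applies it $d$ times starting from $\amin{x^k}=-k\alpha$, uses Jensen on the convex map $y\mapsto 1/(1+y/\alpha)$ with weights $\lambda_i$ (this is exactly where $\kappa_A=\tr(A)^2/\tr(A^2)$ enters, via $\sum_i\lambda_i=\tr(M)$ and $\sum_i\lambda_i^2=\tr(M^2)$), and then optimizes over $\alpha$ to get $(1-\sqrt{k/\kappa_A})^2\tr(A)/m$.

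Your proposed alternative---tracking $\Phi_t=-\partial_y h_t/h_t$, noting the recursion $\Phi_{t+1}=\Phi_t-\partial_y\Phi_t/(m+\Phi_t)$, and then ``identifying a scalar potential''---is not wrong in spirit, but as written it is not a proof: you never say what the potential is, how its increment is bounded, or how a $y$-barrier statement at $y=0$ translates into a lower bound on the $x$-zeros of $h_k(\cdot,0)$. The paper sidesteps all of this by collapsing to one variable at the outset; that is the idea you should add.
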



We prove this theorem by examining the same interlacing family as in the previous section.
As we are no longer in the isotropic case, we need to re-calculate the polynomial at the root of the tree, which
will not necessarily be a Laguerre polynomial.
We give the formula for general random vectors with finite support, but will apply it to the special
  case in which each random vector is uniformly chosen from $u_{1}, \dotsc , u_{m}$.

\begin{lemma}\label{lem:diffformula} 
Let $\X$ be a random $d$-dimensional vector with finite support.
If $\X_1,\ldots,\X_k$ are i.i.d. copies of $\X$,
  then
\[
\E  \mydet{x I - \sum_{i\le k} \X_i\X_i^T }
=
  x^{d-k}  \prod_{i=1}^{d} (1-\lambda_{i} \deriv ) x^k,
\]
where $\lambda_{1}, \dotsc , \lambda_{d}$ are the eigenvalues of 
  $\E \X \X^{T}$.
\end{lemma}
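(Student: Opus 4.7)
The plan is to mimic the proof of Lemma \ref{lem:laguerreFormula}, but where there we exploited the identity $\sum_{j} u_j u_j^T = I$ to reduce the inner sum to a trace, here we must instead apply Lemma \ref{lem:randomUpdate} once for each independent copy $\X_1,\dotsc,\X_k$. This will introduce $k$ auxiliary variables $t_1,\dotsc,t_k$ which we then collapse into a single variable.

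First I would use independence and iterated conditional expectations. Conditioning on $\X_1,\dotsc,\X_{k-1}$ and applying Lemma \ref{lem:randomUpdate} to the inner expectation over $\X_k$ (with matrix $A = xI - \sum_{i<k} \X_i\X_i^T$), we get
\[
\E_{\X_k} \mydet{xI - \sum_{i\le k} \X_i\X_i^T} = (1-\partial_{t_k}) \mydet{xI - \sum_{i<k}\X_i\X_i^T + t_k M}\bigg|_{t_k=0},
\]
where $M = \E \X\X^T$. Since $\partial_{t_k}$ and the evaluation at $t_k = 0$ commute with the outer expectation, iterating $k$ times yields
\[
\E \mydet{xI - \sum_{i\le k}\X_i\X_i^T} = (1-\partial_{t_1})\cdots(1-\partial_{t_k}) \mydet{xI + (t_1+\cdots+t_k) M}\bigg|_{t_1=\cdots=t_k=0}.
\]
The determinant on the right depends on $(t_1,\dotsc,t_k)$ only through the sum $s := t_1+\cdots+t_k$, so each $\partial_{t_i}$ acts as $\partial_s$, and the operator collapses to $(1-\partial_s)^k$ applied to $\det(xI + sM) = \prod_{i=1}^d (x+s\lambda_i)$, evaluated at $s=0$.

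To finish, I would expand both sides of the desired identity as polynomials in $x$ with coefficients in the elementary symmetric functions $e_j(\lambda_1,\dotsc,\lambda_d)$. Writing $\prod_i (x+s\lambda_i) = \sum_{j=0}^{d} e_j(\lambda)\,s^j x^{d-j}$ and using $(1-\partial_s)^k s^j|_{s=0} = (-1)^j \binom{k}{j} j!$ for $j \le k$ (and $0$ otherwise) gives
\[
(1-\partial_s)^k \det(xI + sM)\big|_{s=0} = \sum_{j=0}^{k} (-1)^j \binom{k}{j} j!\, e_j(\lambda)\, x^{d-j}.
\]
On the other hand, expanding $\prod_i (1-\lambda_i \partial_x) = \sum_{j=0}^{d} (-1)^j e_j(\lambda) \partial_x^j$ and applying it to $x^k$ produces $\sum_{j=0}^{k} (-1)^j e_j(\lambda) \frac{k!}{(k-j)!} x^{k-j}$, which when multiplied by $x^{d-k}$ matches the above because $\binom{k}{j} j! = k!/(k-j)!$.

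The main obstacle I anticipate is not in any individual step but in the notational bookkeeping: namely, carefully verifying that $k$ distinct differential operators $(1-\partial_{t_j})$ may be replaced by a single $(1-\partial_s)^k$ once all $t_j$ are set to zero, and tracking the factorials in the final identification. As a sanity check, in the isotropic case $M = I/m$ all $\lambda_i = 1/m$, so $e_j(\lambda) = \binom{d}{j} m^{-j}$, and the formula reduces to the Laguerre expression $(1-\partial_x/m)^k x^d$ of Lemma \ref{lem:laguerreFormula} via the symmetric identity $\binom{d}{j}\frac{k!}{(k-j)!} = \binom{k}{j}\frac{d!}{(d-j)!}$.
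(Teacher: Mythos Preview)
Your proof is correct and follows essentially the same route as the paper: both apply Lemma~\ref{lem:randomUpdate} $k$ times to obtain $\prod_{i}(1-\partial_{t_i})\det\bigl[xI+(\sum_i t_i)M\bigr]\big|_{t_i=0}$, expand the determinant in elementary symmetric functions of the eigenvalues, evaluate the differential operator on the powers of $\sum_i t_i$, and then recognize the result as $x^{d-k}\prod_i(1-\lambda_i\partial_x)x^k$. The only cosmetic difference is that you first collapse the $t_i$ to a single variable $s$ (a step the paper itself uses later, in Section~\ref{sec:alg}) before computing $(1-\partial_s)^k s^j\big|_{s=0}$, whereas the paper computes $\prod_i(1-\partial_{t_i})(\sum_i t_i)^\ell\big|_{t_i=0}$ directly; the resulting coefficient $(-1)^j k!/(k-j)!$ is of course the same.
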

\begin{proof} 
Let $M = \E \X \X^{T}$.
By introducing variables $t_{1}, \dotsc , t_{k}$ and applying Lemma \ref{lem:randomUpdate} $k$ times,
  we obtain
\begin{align*}
\E  \mydet{x I - \sum_{i\le k} \X_i\X_i^T }
& = 
\prod_{i=1}^{k}(1 - \partial_{t_{i}})
\mydet{x I + \left(\sum_{i=1}^{k} t_{i} \right) M}  \big|_{t_{1} = \dotsb  = t_{k} = 0}
\\
& = 
\prod_{i=1}^{k}(1 - \partial_{t_{i}})
\left(
  \sum_{\ell=0}^{d} x^{d- \ell} \left(\sum t_{i} \right)^{\ell} e_{\ell}(M)
 \right)
  \big|_{t_{1} = \dotsb  = t_{k} = 0}.
\end{align*}
By computing
\[
   \prod_{i=1}^{k}(1 - \partial_{t_{i}})  \left(\sum_{i=1}^{k} t_{i} \right)^{\ell}   \big|_{t_{1} = \dotsb  = t_{k} = 0}
= 
\begin{cases}
\frac{(-1)^{\ell} k!}{(k-\ell)!}  &  \text{if $k \geq \ell$}
\\
0  & \text{otherwise},
\end{cases}
\]
we simplify the above expression to
\[
  \sum_{\ell=0}^{k} x^{d- \ell} (-1)^{\ell} \frac{k!}{(k-\ell)!} e_{\ell}(M).
\]
Since $\deriv^{\ell} x^k = x^{k-\ell} k! / (k-\ell)!$ for $\ell \leq k$
  and $\deriv^{\ell} x^{k} = 0$ for $\ell > k$,
  we can rewrite this as
\begin{align*}
x^{d-k}  \sum_{\ell=0}^{k} \deriv^{\ell} (-1)^{\ell} e_{\ell}(M) x^{k}
& =
x^{d-k}  \sum_{\ell=0}^{d} \deriv^{\ell} (-1)^{\ell} e_{\ell}(M) x^{k}
\\
& =
x^{d-k} \mydet{\deriv I - M} x^{k}
\\
& =
x^{d-k} \prod_{i=1}^d(1-\lambda_{i} \deriv )x^k,
\end{align*}
as desired.
\end{proof}

We now require a lower bound on the smallest zero of $\prod_{i=1}^{d} (1-\lambda_{i} \deriv ) x^k$.
We will use the following lemma, which tells us that the $\aminName$ of a polynomial
  grows in a controlled way
  as a function of $\lambda$ when we apply a $(1- \lambda \deriv )$ operator to it.
This is similar to Lemma 3.4 of \cite{BatsonSpielmanSrivastava}, which was written in
  the language of random rank one updates of matrices.

\begin{lemma}\label{lem:barriershift} 
If $p(x)$ is a real-rooted polynomial and $\lambda>0$, then
  $(1 - \lambda \deriv ) p(x)$ is real-rooted and
\[
\amin{(1-\lambda \deriv )p(x)}
\ge \amin{p(x)} +  \frac{1}{1/\lambda + 1/\alpha}.
\]
\end{lemma}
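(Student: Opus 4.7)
The plan is to exploit the barrier-function formulation $\amin{p} = \min\{z : \Phi_p(z) = 1/\alpha\}$ from the previous subsection. Writing $p(x) = \prod_i(x - \beta_i)$ with $\beta_1 \le \cdots \le \beta_d$, we have $\Phi_p(x) = \sum_i 1/(\beta_i - x)$, strictly increasing on $(-\infty, \beta_1)$ from $0$ to $+\infty$. Set $q \defeq (1 - \lambda\partial_x)p = p - \lambda p'$ and $\alpha' \defeq 1/(1/\alpha + 1/\lambda) = \alpha\lambda/(\alpha + \lambda)$. It suffices to prove (a) that $q$ is real-rooted, and (b) that $\Phi_q(y) \le 1/\alpha$ at the point $y \defeq \amin{p} + \alpha'$; since $\Phi_q$ is itself strictly increasing on $(-\infty, \mu_1)$ for $\mu_1$ the smallest root of $q$, claim (b) will force $\amin{q} \ge y = \amin{p} + \alpha'$.

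For (a), I would give a direct sign analysis: since $q = p(1 - \lambda p'/p)$, the roots of $q$ are the solutions of $\sum_i 1/(x - \beta_i) = 1/\lambda$, and on each of the $d$ open intervals $(\beta_i, \beta_{i+1})$ and $(\beta_d, +\infty)$ the left-hand side is continuous and strictly decreasing with a range containing $1/\lambda$, so each contributes exactly one solution (multiple roots of $p$ are handled by perturbation, since real-rootedness is preserved under limits). For (b), I would first derive the identity
\[
\Phi_q(y) = \Phi_p(y) - \frac{\lambda \Phi_p'(y)}{1 + \lambda \Phi_p(y)}
\]
via $p' = -\Phi_p \cdot p$ and $p'' = p(\Phi_p^2 - \Phi_p')$, noting that $y < \beta_1$ because $1/\alpha = \Phi_p(\amin{p}) \ge 1/(\beta_1 - \amin{p})$ gives $\beta_1 - \amin{p} \ge \alpha > \alpha'$.

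Introducing the substitutions $u_i \defeq 1/(\beta_i - y) > 0$ and $v_i \defeq 1/(\beta_i - \amin{p}) > 0$, a direct check gives $v_i = u_i/(1 + \alpha' u_i)$ and $\sum_i v_i = \Phi_p(\amin{p}) = 1/\alpha$. The crucial numerical identity $\alpha'(1 + \lambda \sum v_i) = \alpha'(1 + \lambda/\alpha) = \lambda$ then lets the inequality $\Phi_q(y) \le 1/\alpha$ rearrange cleanly to
\[
\Bigl(\sum_i u_i^2\Bigr)\Bigl(1 + \lambda \sum_i v_i\Bigr) \;\ge\; \Bigl(\sum_i u_i v_i\Bigr)\Bigl(1 + \lambda \sum_i u_i\Bigr).
\]

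The main obstacle is verifying this last inequality, and the trick is to write the difference of the two sides as a sum of two manifestly nonneg pieces. The first is $\sum u_i^2 - \sum u_i v_i = \sum u_i(u_i - v_i) \ge 0$, using $u_i \ge v_i > 0$. The second is
\[
\Bigl(\sum u_i^2\Bigr)\Bigl(\sum v_j\Bigr) - \Bigl(\sum u_i v_i\Bigr)\Bigl(\sum u_j\Bigr) = \tfrac{1}{2}\sum_{i,j}(u_i - u_j)(u_i v_j - u_j v_i),
\]
and substituting $v_i = u_i/(1+\alpha' u_i)$ yields $u_i v_j - u_j v_i = \alpha' u_i u_j(u_i - u_j)/[(1+\alpha' u_i)(1+\alpha' u_j)]$, so each summand becomes a nonneg multiple of $(u_i - u_j)^2$. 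Spotting this symmetrization is the one nontrivial algebraic step; once it is in hand, both pieces are nonneg and the lemma follows.
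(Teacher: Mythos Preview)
Your proof is correct and follows the same barrier-function strategy as the paper: both derive the identity $\Phi_{q} = \Phi_{p} - \lambda\Phi_{p}'/(1+\lambda\Phi_{p})$, set $b=\amin{p}$ and $\delta=\alpha'=1/(1/\lambda+1/\alpha)$, check that $b+\delta$ lies below all roots, and then reduce everything to verifying $\Phi_{q}(b+\delta)\le \Phi_{p}(b)$. The only substantive difference is in that last algebraic verification. The paper substitutes $1/\lambda = 1/\delta - \Phi_{p}(b)$ to rewrite the target as
\[
\bigl(\Phi_{p}(b+\delta)-\Phi_{p}(b)\bigr)^{2}\ \le\ \Phi_{p}'(b+\delta)-\delta^{-1}\bigl(\Phi_{p}(b+\delta)-\Phi_{p}(b)\bigr),
\]
and proves this by a weighted Cauchy--Schwarz step together with the side bound $\delta\,\Phi_{p}(b)\le 1$. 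You instead rearrange (using the relation $u_{i}-v_{i}=\alpha' u_{i}v_{i}$ and the identity $\alpha'(1+\lambda\sum v_{i})=\lambda$) to the bilinear form
\[
\Bigl(\sum_{i} u_{i}^{2}\Bigr)\Bigl(1+\lambda\sum_{j} v_{j}\Bigr)\ \ge\ \Bigl(\sum_{i} u_{i}v_{i}\Bigr)\Bigl(1+\lambda\sum_{j} u_{j}\Bigr),
\]
and finish by the symmetrization $\tfrac{1}{2}\sum_{i,j}(u_{i}-u_{j})(u_{i}v_{j}-u_{j}v_{i})\ge 0$, which becomes a sum of nonnegative multiples of $(u_{i}-u_{j})^{2}$ once $v_{i}=u_{i}/(1+\alpha' u_{i})$ is plugged in. Both finishes are short; the paper's is the classical Batson--Spielman--Srivastava barrier computation, while yours is self-contained and sidesteps the auxiliary inequality $\delta\,\Phi_{p}(b)\le 1$.
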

\begin{proof}
It is well known that $(1 - \lambda \deriv ) p(x)$ is real rooted:
  see \cite[Problem V.1.18]{PolyaSzegoII}, \cite[Corollary 18.1]{Marden}, or \cite[Lemma 3.7]{ICM14} or for a proof.
To see that $\lambda_{min}(p) \leq \lambda_{min} (p - \lambda p' ) $ for $\lambda > 0$,
  recall that $\lambda_{min}(p') \geq \lambda_{min}(p)$.
So, both $p$ and $- \lambda p'$ have the same single sign for all $x < \lambda_{min}(p)$, and thus
  $p - \lambda p' $ cannot be zero there.

Let $p$ be have degree $d$ and zeros $\mu_{d} \le \ldots\le \mu_{1}$. 
Let $b=\amin{p}$, so that
  $\Phi_{p}(b) = 1/\alpha$.
To prove the claim it is enough to show for
\[
\delta = \frac{1}{1/\lambda+1/\alpha}
\]
that $b+\delta \leq \lambda_{d}((1-\lambda \deriv )p)$ and  that
\begin{equation}\label{eqn:barrierShiftGoal}
\Phi_{(1-\lambda \deriv )p}(b+\delta) \le \Phi_{p}(b).
\end{equation}
The first statement is true because
\[
\frac{1}{\mu_{d} - b} \le \Phi_{p}(b) = 1/\alpha,
\]
so $b+\delta < b+\alpha \le \mu_{d} \leq \lambda_{d} ((1-\lambda \deriv )p)$.

We begin our proof of  the second statement
  by expressing
  $\Phi_{(1-\lambda \deriv )p}$ in terms of 
  $\Phi_{p}$ and $\Phi_{p}'$:
\begin{equation}\label{eqn:barrierupdate}
\Phi_{(1-\lambda \deriv )p} 
= -\frac{(p-\lambda p')'}{p-\lambda p'} 
= -\frac{(p(1+ \lambda\Phi_p))'}{p(1+\lambda\Phi_p)} 
= -\frac{p'}{p}-\frac{\lambda \Phi_p'}{1+\lambda\Phi_p}
=\Phi_p-\frac{\Phi_p'}{1/\lambda+\Phi_p},
\end{equation}
wherever all quantities are finite, which happens everywhere except
at the zeros of $p$ and $(1-\lambda \deriv )p$. 
Since $b+\delta$ is strictly below the zeros of both, it follows that:
\[
\Phi_{(1-\lambda \deriv ) p}(b+\delta) =
\Phi_p(b+\delta)-\frac{\Phi_p'(b+\delta)}{1/\lambda +\Phi_p(b+\delta)}.
\]
After replacing  $1/\lambda $ by $1/\delta -1/\alpha= 1/\delta-\Phi_p(b)$ and
rearranging terms (noting the positivity of $\Phi_p(b+\delta)-\Phi_p(b)$),
  we see that \eqref{eqn:barrierShiftGoal} is equivalent to
\[
\left(\Phi_{p}(b + \delta ) - \Phi_{p}(b) \right)^{2}
\leq 
\Phi_{p}' (b+\delta ) - \delta^{-1} \left(\Phi_{p}(b + \delta ) - \Phi_{p}(b) \right).
\]
We now finish the proof by expanding $\Phi_{p}$ and $\Phi_{p}'$ in terms of
  the zeros of $p$:
\begin{align*}
\left(\Phi_{p}(b + \delta ) - \Phi_{p}(b) \right)^{2}
& = 
\left(
 \sum_{i} \frac{1}{\mu_{i} -b - \delta} - \sum_{i} \frac{1}{\mu_{i} -b}
 \right)^{2}
\\
& = 
\left(
 \sum_{i} \frac{\delta }{(\mu_{i} -b - \delta)(\mu_{i} -b)}
 \right)^{2}
\\
& \leq 
\left(
 \sum_{i} \frac{\delta }{\mu_{i} -b}
 \right)
\left(
 \sum_{i} \frac{\delta }{(\mu_{i} -b - \delta)^{2} (\mu_{i} -b)}
 \right),
& \text{as all terms are positive}
\\
& \leq 
\left(
 \sum_{i} \frac{\delta }{(\mu_{i} -b - \delta)^{2} (\mu_{i} -b)}
 \right),
& \text{as $\delta \Phi_{p}(b) \leq 1$}
\\
& =
 \sum_{i} \frac{1}{(\mu_{i} -b - \delta)^{2}}
- 
 \sum_{i} \frac{1}{(\mu_{i} - b) (\mu_{i} -b - \delta)}
\\
& =
 \sum_{i} \frac{1}{(\mu_{i} -b - \delta)^{2}}
- 
 \delta^{-1} \left(
\sum_{i} \frac{1}{\mu_{i} -b - \delta}
-
\sum_{i} \frac{1}{\mu_{i} - b}
 \right) 
\\
& =
\Phi_{p}' (b+\delta ) - \delta^{-1} \left(\Phi_{p}(b + \delta ) - \Phi_{p}(b) \right).
\end{align*}
\end{proof}

\begin{theorem}\label{thm:muroot}
Let $\X$ be a random 
  $d$-dimensional  vector with finite support such that 
  $\E \X\X^T=M$, 
  let $\X_1,\ldots,\X_k$ be i.i.d. copies of $\X$,
  and let
\[
p(x) = \E  \mydet{x I - \sum_{i\le k} \X_i\X_i^T }
\]
Then
\[
\lambda_{k} (p) \ge
	\left(1-\sqrt{\frac{k}{\kappa_M}}\right)^2\tr(M),
\]
where $\kappa_M$ is defined as in \eqref{eqn:kappa}.
\end{theorem}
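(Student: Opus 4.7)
The plan is to reduce the problem to bounding the smallest root of the polynomial produced by Lemma \ref{lem:diffformula}, then track the lower barrier through $d$ applications of Lemma \ref{lem:barriershift}, and finally use a Jensen-type inequality together with a one-variable optimization to get the claimed closed-form.

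First, Lemma \ref{lem:diffformula} gives $p(x) = x^{d-k}\, q(x)$ where
\[
q(x) \defeq \prod_{i=1}^{d}(1-\lambda_i \deriv)\, x^k
\]
and $\lambda_1,\dotsc,\lambda_d$ are the eigenvalues of $M$. Because $p$ is an expectation of characteristic polynomials of PSD matrices, all its roots are nonnegative, so $\lambda_k(p) \ge \lmin(q)$. It therefore suffices to lower bound $\lmin(q)$.

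Second, I fix any $\alpha>0$ and iterate Lemma \ref{lem:barriershift} starting from $x^k$, using $\amin{x^k} = -k\alpha$ from Claim \ref{clm:aminMonomial}. Real-rootedness is preserved at each step, so after the full sequence of $d$ updates we obtain
\[
\lmin(q) \;\ge\; \amin{q} \;\ge\; -k\alpha \;+\; \sum_{i=1}^{d}\frac{\lambda_i\alpha}{\alpha+\lambda_i}.
\]
The key step is now to collapse this sum into something depending only on $\tr(M)$ and $\tr(M^2)$. The function $\lambda \mapsto \alpha/(\alpha+\lambda)$ is convex, so weighting each $\lambda_i$ by $\lambda_i/\tr(M)$ (nonnegative, summing to $1$, with weighted mean $\tr(M^2)/\tr(M)$) and applying Jensen's inequality gives
\[
\sum_{i=1}^{d}\frac{\lambda_i\alpha}{\alpha+\lambda_i}
= \tr(M) \sum_{i=1}^{d}\frac{\lambda_i}{\tr(M)} \cdot \frac{\alpha}{\alpha+\lambda_i}
\;\ge\; \frac{\tr(M)^{2}\,\alpha}{\tr(M)\,\alpha + \tr(M^2)}.
\]

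Finally, I optimize the resulting lower bound over $\alpha > 0$. Writing $\sigma = \tr(M)$, $\tau = \tr(M^2)$, a short calculus exercise shows that $g(\alpha) = -k\alpha + \sigma^{2}\alpha/(\sigma\alpha+\tau)$ is maximized at $\alpha^{*} = \sqrt{\tau/k} - \tau/\sigma$, which is positive exactly when $k < \sigma^2/\tau = \kappa_M$. Plugging back in yields
\[
g(\alpha^{*}) \;=\; \sigma - 2\sqrt{k\tau} + \tfrac{k\tau}{\sigma} \;=\; \sigma\left(1-\tfrac{\sqrt{k\tau}}{\sigma}\right)^{2} \;=\; \left(1-\sqrt{\tfrac{k}{\kappa_M}}\right)^{2}\tr(M),
\]
which is exactly the claimed bound. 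The boundary case $k=\kappa_M$ gives $0$, which is trivial.

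The main obstacle is identifying the correct weighting in Jensen's inequality. Uniform weights point in the wrong direction (the sum is concave in $\alpha$, not in $\lambda$), while crude bounds like $\lambda_i/(\alpha+\lambda_i) \ge \lambda_i/\alpha - \lambda_i^{2}/\alpha^{2}$ yield only $\tr(M) - 2\sqrt{k\tr(M^2)}$, losing the positive correction term $k\tr(M^2)/\tr(M)$ that is needed to complete the square. Weighting by $\lambda_i/\tr(M)$ is what forces $\tr(M^2)/\tr(M)$ (and hence $\kappa_M$) to appear naturally and produces a sharp bound after the optimization.
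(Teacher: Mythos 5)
Your proof is correct and follows essentially the same route as the paper: apply Lemma \ref{lem:diffformula}, iterate Lemma \ref{lem:barriershift} with the barrier initialization $\amin{x^k}=-k\alpha$, collapse the resulting sum via Jensen's inequality applied to the convex map $\lambda\mapsto\alpha/(\alpha+\lambda)$ with weights $\lambda_i/\tr(M)$, and optimize over $\alpha$. The only superficial difference is that the paper first normalizes $\tr(M)=1$, whereas you carry $\sigma=\tr(M)$ and $\tau=\tr(M^2)$ through explicitly; the formulas for $\alpha^*$ and the final bound agree after this rescaling.
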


\begin{proof}
By multiplying $\X$ through by a constant, we may assume without loss of generality that
  $\Tr{M} = 1$.
In this case, we need to prove
\[
\lambda_{k} (p) \ge
	\left(1 - \sqrt{1- k \Tr{M^{2}}} \right)^2.
\]
Let $0 \leq \lambda_{d} \leq \dotsb \leq \lambda_{1}$ be the eigenvalues of $M$,
  so that Lemma \ref{lem:diffformula} implies
\[
  p(x) = x^{d-k} \prod_{i=1}^{d} (1 - \lambda_{i} \deriv ) x^{k}.
\]
Applying Lemma \ref{lem:barriershift} $d$ times for any $\alpha > 0$  yields
\begin{align*}
\lambda_{k} (p)
& \geq
\lambda_{k} \left(\prod_{i=1}^{d} (1 - \lambda_{i} \deriv ) x^{k} \right)
\\
& \geq
\amin{\prod_{i=1}^{d} (1 - \lambda_{i} \deriv ) x^{k} }
\\
& \geq \amin{x^k} + \sum_{i=1}^{d} \frac{1}{\lambda^{-1}_i+\alpha^{-1}}
\\&= -k\alpha+\sum_{i=1}^{d} \frac{1}{\lambda^{-1}_i+\alpha^{-1}},
\end{align*}
by Claim \ref{clm:aminMonomial}.

To lower bound this expression, observe that the function
\[
 y\mapsto \frac{1}{1+y\alpha^{-1}}
\]
is convex for all $\alpha>0$. 
Since $\sum_{i} \lambda_{i} = \Tr{M} = 1$, Jensen's inequality implies that
\[
 \sum_i \frac{1}{\lambda^{-1}_i+\alpha^{-1}}
 =\sum_i \frac{\lambda_i}{1+\lambda_i\alpha^{-1}}
  \ge \frac{1}{1+(\sum_i \lambda^2_i)\alpha^{-1}}
  =\frac{1}{1+\tr(M^2)\alpha^{-1}}.
\]
Thus, $\lambda_{k}(p(x))$ is at least
  $ -k \alpha+ 1/(1+\tr(M^2)\alpha^{-1})$, for every $\alpha>0$. 
Taking derivatives, we find that this
  expression is maximized when
\[
\alpha = 
  \Tr{M^{2}} \left(1 / \sqrt{k \Tr{M^{2}}} - 1 \right)
\]
which may be substituted to obtain a bound of 
\[
  \lambda_{k} (p) \geq  (1-\sqrt{k\tr(M^2)})^2,
\]
as desired.
\end{proof}

\subsection{A Polynomial Time Algorithm}\label{sec:alg}

We now explain how to produce the subset $S$ guaranteed by Theorem
  \ref{thm:nonisotropic} in polynomial time, up to a $1/n^c$ additive error in the
  value of $\sigma^2_k$ where $c$ can be chosen to be any constant. 
Selecting the $k$ elements of $S$ corresponds to an
interlacing family of depth $k$, whose nodes are labeled by expected
characteristic polynomials conditioned on partial assignments. Recall that the
polynomial corresponding to a partial assignment $s_1,\ldots,s_{j}\in
[m]^{j}$ is
$$ f_{s_1,\ldots,s_j}(x):=\E\chi\left(\sum_{i=1}^{j} u_{s_i}u_{s_i}^T  + \sum_{i=j+1}^k \X_i\X_i^T\right).$$
To find a full assignment with $\lambda_k(f_{s_1,\ldots,s_k})\ge
\lambda_k(f_{\rootnode})$, one has to solve $k$ subproblems of the following
type: given a partial assignment $s_1,\ldots,s_{j}$, find an index $s_{j+1}\in [m]$ such that
$\lambda_k(f_{s_1,\ldots,s_{j+1}})\ge \lambda_k(f_{s_1,\ldots,s_j})$.

We first show how to efficiently compute any partial assignment polynomial
$f_{s_1,\ldots,s_j}$. Letting $C=\sum_{i=1}^j u_{s_i}u_{s_i}^T$ and $\E \X\X^T=BB^T/m=M$ and applying
Lemma \ref{lem:randomUpdate} repeatedly, we have:
\begin{align*}
f_{s_1,\ldots,s_j}(x) &= \E \mydet{xI-C - \sum_{i=j+1}^k \X_i\X_i^T}\\
				    &=\left(\prod_{i=j+1}^k
(1-\partial_{t_i})\right)\mydet{xI-C+\sum_{i=j+1}^k t_i M}\bigg|_{t_i=0} \\
&=\left(\prod_{i=j+1}^k(1-\partial_{t_i})\right)
  \mydet{xI-C+\left(\sum_{i=j+1}^k t_i\right) M} \bigg|_{t_i=0}. \\
\end{align*}
We now observe that the latter determinant is a polynomial in
$t:=t_{j+1}+\ldots+t_k$. Since for any differentiable function of $t$ we have
$\partial_t = \partial_{t_i}$ for every $i=j+1,\ldots,k$, and the operator
$1-\partial_t$ preserves the property of being a polynomial in $t$, we may rewrite this
expression as:
$$ f_{s_1,\ldots,s_j}(x) = (1-\partial_t)^{k-j}
\mydet{xI-C+tM}\big|_{t=0}.$$
The bivariate polynomial
$$\mydet{xI-C+tM} :=\sum_{i=0}^n (-1)^ix^{n-i} e_i(C-tM)$$
has univariate polynomial coefficients $e_i(C-tM)\in \R[t]$ of degree $i\le n$,
These can be computed in polynomial time by a number of methods.

For example, we can compute the polynomials $e_i(C-tM)$ by
  exploiting the fact that the characteristic polynomial of a matrix can be computed
  in time $O(n^{\omega})$ where $\omega \leq 3$ is the matrix multiplication exponent
  \cite{keller1985fast}.
We first compute the characteristic polynomial of $C-t M$
  for $t$ equal to each of the $n$th roots of unity, in time $O(n^{\omega +1})$.
We then use fast polynomial interpolation 
  via the discrete Fourier transform 
  on the coefficients of these polynomials to recover the
  coefficients in $t$ of each $e_{i}(C-tM)$.
This takes time $O(n \log n)$ per polynomial.

Thus, in time $O(n^{\omega+1}+n^2\log n) = O(n^{\omega+1})$, we can compute the bivariate
polynomial $\mydet{xI-C+tM}$. Applying the operator
$$ (1-\partial_t)^{k-j} = \sum_{i=0}^{k-j} (-1)^{k-j-i}
\binom{k-j}{i}\partial_t^{i}$$
to each coefficient and setting $t$ to zero amounts to simply multiplying each
coefficient of each $e_i(C-tM)$ by a binomial coefficient, which can be carried
out in $O(n^2)$ time. Thus, we can compute $f_{s_1,\ldots,s_j}(x)$ in
$O(n^{\omega+1})$ time.

Given this subroutine, the algorithm is straightforward: given a partial
assignment $s_1,\ldots,s_j$, extend it to the $s_1,\ldots,s_{j+1}$ which
maximizes $\lambda_k(f_{s_1,\ldots,s_{j+1}})$. This may be done by enumerating
over all $m$ possibilities for $s_{j+1}$ and 
computing an $\epsilon-$approximation to the smallest root of
$f_{s_1,\ldots,s_{j+1}}(x)/x^{n-k}$ using the standard technique of binary search with
a Sturm sequence (see, e.g., \cite{basu}). This takes time $O(n^2\log(1/\epsilon))$ per polynomial, which is less than
the time required to compute the polynomial when $\epsilon=1/\mathrm{poly(n)}$.

The total running time to find a complete assignment is therefore
$O(kmn^{\omega+1})$. We have not made any attempt to optimize this running time,
and suspect it can be improved using more sophisticated ideas.

\section{The Isotropic Case without Replacement: Jacobi Polynomials} \label{sec:jacobi}

In this section we show how to improve Theorem \ref{thm:ss} in the isotropic case
  by constructing an interlacing family
  using subsets of vectors instead of sequences.

\begin{theorem}\label{thm:jacobi}
	Suppose $u_1,\ldots,u_m\in\R^n$ satisfy $\sum_{i\le m} u_{i} u_{i}^{T} =I$ and
	$k \le d$ is an integer. Then there exists a subset $S\subset [m]$ of size $k$ such that
\begin{equation}\label{eqn:ss}\lambda_k\left(\sum_{i\in S}  u_{i} u_{i}^{T} \right) 
\ge 
\frac{\left(\sqrt{d (m-k)} - \sqrt{k(m-d)}\right)^2}{m^2}.\end{equation}
\end{theorem}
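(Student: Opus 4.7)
The proof follows the same three-step template as the Laguerre case of Section \ref{sec:laguerre}: construct an interlacing family, identify the expected characteristic polynomial at the root, and lower bound its smallest root. The crucial change is that we now sample subsets $S\subset[m]$ of size $k$ uniformly \emph{without} replacement, rather than sequences in $[m]^k$, which is precisely why the definition of interlacing family in Definition \ref{def:if} was formulated in its tree-based rather than Cartesian form.

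\textbf{Step 1 (Interlacing family).} I would build a tree whose nodes at depth $t$ are indexed by ordered sequences $(s_1,\ldots,s_t)$ of distinct indices from $[m]$, labeled by the conditional expectation
\[
f_{s_1,\ldots,s_t}(x) \;\defeq\; \E\left[\,\mydet{xI - \sum_{i\in S} u_i u_i^{T}}\;\Big|\;S\supseteq\{s_1,\ldots,s_t\}\right],
\]
where $S$ ranges uniformly over $\binom{[m]}{k}$. Leaves (depth $k$) carry the actual characteristic polynomials $f_S(x)$. To show that this labeling yields an interlacing family, I need to verify at each internal node that the sibling polynomials (obtained by fixing the next index) admit a common interlacer, in the sense of Claim \ref{clm:cauchyInterlacing}. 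This should follow by exhibiting each sibling as the expected characteristic polynomial of $\sum_{i\le t} u_{s_i} u_{s_i}^T + u_j u_j^T +(\text{random rank-}(k-t-1)\text{ update})$ and invoking a Cauchy-interlacing style argument together with Theorem \ref{thm:Fisk}; alternatively one can appeal to real-stability preservation for the symmetric exclusion process on $m$ variables, as used in \cite{IF1}.

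\textbf{Step 2 (Root polynomial).} By Cauchy-Binet, $e_\ell(\sum_{i\in S} u_i u_i^T)=\sum_{T\subset S,\,|T|=\ell}\mydet{U_T^{T}U_T}$, where $U$ has columns $u_1,\ldots,u_m$. A fixed $T$ of size $\ell$ lies in $\binom{m-\ell}{k-\ell}$ subsets of size $k$, and since $UU^T=I_d$ the matrix $U^TU$ is a rank-$d$ projection, so $\sum_{|T|=\ell}\mydet{U_T^{T}U_T}=e_\ell(U^TU)=\binom{d}{\ell}$. Averaging therefore gives
\[
f_{\rootnode}(x) \;=\; \E_S\,\mydet{xI - U_S U_S^{T}} \;=\; \sum_{\ell=0}^{\min(k,d)} (-1)^\ell\,\frac{\binom{m-\ell}{k-\ell}}{\binom{m}{k}}\binom{d}{\ell}\, x^{d-\ell}.
\]
After factoring out the trivial $x^{d-k}$ and rescaling the argument by $m$, the remaining degree-$k$ polynomial should, term by term, match the standard series representation of the Jacobi polynomial $P_k^{(m-d-k,\,d-k)}$ (up to normalization), so the expected polynomial is explicitly a scaled Jacobi polynomial.

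\textbf{Step 3 (Smallest root bound).} The main work is then to prove that the smallest root of this Jacobi polynomial is at least $(\sqrt{d(m-k)}-\sqrt{k(m-d)})^2/m^2$. I would attempt this through a barrier-function argument in the spirit of Lemma \ref{lem:barriershift}: rewrite the Jacobi polynomial as a succession of first-order operators $(1-\lambda_i\deriv)$ or $(a_i - b_i\deriv)$ acting on a monomial, track $\aminName$ through each operator using a suitable shift lemma, and optimize over the auxiliary parameter $\alpha$. The two-parameter symmetry $(d,m-d) \leftrightarrow (k,m-k)$ in the target bound suggests that optimization will balance two terms analogous to those in Theorem \ref{thm:muroot}. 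I expect this final optimization, and the derivation of the correct Jacobi analogue of Lemma \ref{lem:barriershift}, to be the main technical obstacle, since the Krasikov bound used in Corollary \ref{cor:lagBound} does not directly supply a finitary inequality of the required sharpness for Jacobi polynomials; a custom barrier-shift argument tailored to the two-parameter recursion will likely be needed.
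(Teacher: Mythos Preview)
Your three-step outline matches the paper's architecture, and your Step~2 derivation of $f_{\rootnode}$ via Cauchy--Binet is a correct alternative to the paper's differential-operator computation. However, Steps~1 and~3 each miss the key idea that makes the paper's argument go through.

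\textbf{Step 1.} Your interlacing argument is incomplete. The siblings in the tree are the conditional expectations $f_{T\cup\{i\}}$ and $f_{T\cup\{j\}}$, not characteristic polynomials of fixed matrices differing by a rank-one term; moreover, because sampling is without replacement, the residual randomness in $f_{T\cup\{i\}}$ and $f_{T\cup\{j\}}$ is \emph{different} (the first excludes $i$, the second excludes $j$), so a direct Cauchy-interlacing argument on the ``fixed $+$ random'' decomposition does not apply. The paper's solution is the structural formula of Lemmas~\ref{lem:jacobiAddOne}--\ref{lem:jacobiInternal}: there is a \emph{single} linear differential operator $\mathcal{L}$ (depending only on $|T|$, not on $j$) such that $f_{T\cup\{j\}}=\mathcal{L}\,p_{T\cup\{j\}}$ for every $j\notin T$. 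Since $\mathcal{L}$ preserves real-rootedness and the $p_{T\cup\{j\}}$ have a common interlacing by Claim~\ref{clm:cauchyInterlacing}, convex combinations of the $f_{T\cup\{j\}}$ are real-rooted. Without this ``same operator for all siblings'' observation, your Step~1 has a genuine gap.

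\textbf{Step 3.} The paper does \emph{not} write the Jacobi polynomial as a product of $(1-\lambda_i\deriv)$ operators acting on a monomial. Instead, it proves a combinatorial identity (Lemma~\ref{lem:jacRootTransform}) showing that $\lambda_k(f_{\rootnode})$ equals the smallest root of $\kderiv{d}(x-1)^{m-k}x^k$. The point is that $(x-1)^{m-k}x^k$ has only two distinct roots, so its $\aminName$ is the root of an explicit quadratic in $x$; one then applies Lemma~\ref{lem:amin-deriv} (pure differentiation raises $\aminName$ by exactly $\alpha$) $d$ times and optimizes over $\alpha$. This is both simpler and sharper than the route you propose; in particular, the $(k,d)$ swap in Lemma~\ref{lem:jacRootTransform} is what realizes the symmetry you noticed, and the use of pure derivatives (rather than $(1-\lambda\deriv)$) avoids any Jensen-type loss.
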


The leaves of the tree in the interlacing family correspond to subsets of $[m]$ of size $k$.
The root corresponds to the empty set, $\emptyset$, and the other internal nodes 
  correspond to subsets of size 
  less than $k$.
The children of each internal node are its supersets of size one larger.

For each $S \subset [m]$ we define
\[
U_{S} = \sum_{i \in S} u_{i} u_{i}^T
\AND 
p_{S} (x) = \mydet{xI - U_{S}}.
\]
We label the leaf nodes with the polynomials $p_{S}(x)$.
For an internal node associated with a set $T$ of size less than $k$,
  we label that node by the polynomial
\[
f_{T}(x) = \E_{\substack{S \supset T \\ \sizeof{S} = k}} p_{S}(x),
\]
where the expectation is taken uniformly over sets $S$ of size $k$
  containing $T$.
All polynomials in the family are real and monic since they
  are averages of characteristic polynomials of Hermitian matrices.
We now derive expressions for these polynomials and prove that they
  satisfy the requirements of Definition \ref{def:if}.
We give the connection between these polynomials and Jacobi polynomials in
  Section \ref{sec:JacobiConnection}.

\begin{lemma}\label{lem:jacobiAddOne}
For every subset $T$ of $[m]$ of size $t$,
\[
  \sum_{i \not \in T} p_{T \union \setof{i}} (x) = 
  (x-1)^{- (m-d-t-1)} \deriv (x-1)^{m-d-t} p_{T} (x).
\]
\end{lemma}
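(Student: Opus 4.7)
The plan is to verify the identity by direct computation, using the two determinant formulas (Lemmas \ref{lem:matrix_det} and \ref{lem:matrix_deriv}) together with the isotropic hypothesis $\sum_{i \in [m]} u_i u_i^T = I$. Both sides should reduce to the same polynomial expression involving $p_T(x)$ and $\partial_x p_T(x)$, so the task is to compute both sides separately and match them.

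First I would expand the left-hand side. For each $i \notin T$, Lemma \ref{lem:matrix_det} applied to $A = xI - U_T$ and $u = u_i$ gives
\[
p_{T \cup \{i\}}(x) = \mydet{xI - U_T - u_i u_i^T} = p_T(x)\bigl(1 - \Tr{(xI - U_T)^{-1} u_i u_i^T}\bigr).
\]
Summing over $i \notin T$ and using the crucial identity $\sum_{i \notin T} u_i u_i^T = I - U_T$ from the isotropic hypothesis,
\[
\sum_{i \notin T} p_{T \cup \{i\}}(x) = p_T(x)\bigl[(m - t) - \Tr{(xI - U_T)^{-1}(I - U_T)}\bigr].
\]
Writing $I - U_T = (xI - U_T) - (x-1)I$, the trace collapses to $d - (x-1)\Tr{(xI - U_T)^{-1}}$, so the left-hand side becomes
\[
(m - t - d)\, p_T(x) + (x - 1)\, p_T(x)\Tr{(xI - U_T)^{-1}}.
\]
By Jacobi's formula (Lemma \ref{lem:matrix_deriv}), the second term equals $(x-1)\partial_x p_T(x)$.

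Next I would expand the right-hand side by the product rule:
\[
\partial_x \bigl[(x-1)^{m-d-t} p_T(x)\bigr] = (m-d-t)(x-1)^{m-d-t-1} p_T(x) + (x-1)^{m-d-t} \partial_x p_T(x).
\]
Multiplying by $(x-1)^{-(m-d-t-1)}$ yields exactly $(m - d - t)\, p_T(x) + (x - 1) \partial_x p_T(x)$, matching the simplified left-hand side.

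There is no real obstacle here: the identity is a one-line consequence of Lemmas \ref{lem:matrix_det}--\ref{lem:matrix_deriv} once the isotropic hypothesis is used to collapse the sum $\sum_{i \notin T} u_i u_i^T$. The only mild care needed is to keep the exponent $m - d - t$ versus $m - d - t - 1$ consistent between the two sides, and to note that the identity is first derived as a polynomial identity away from $x = 1$ but extends to all $x$ by polynomiality.
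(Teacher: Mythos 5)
Your proof is correct and follows essentially the same route as the paper: apply Lemma \ref{lem:matrix_det} term by term, collapse the sum via $\sum_{i\notin T} u_iu_i^T = I - U_T$, decompose $I - U_T$ to extract $\Tr{(xI-U_T)^{-1}}$, invoke Jacobi's formula, and match against the product-rule expansion of the right-hand side. The decompositions $(I - xI) + (xI - U_T)$ in the paper and $(xI - U_T) - (x-1)I$ in your write-up are the same step expressed with a harmless sign rearrangement.
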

The expression on the left above is a sum of polynomials and thus is clearly a polynomial.
To make it clear that the term on the right is a polynomial, 
  we observe that for all polynomials $p$ and all positive $k$,
  $\deriv (x-1)^{k} p(x)$ is divisible by $(x-1)^{k-1}$.
So, the expression on the right above is a polynomial when $m-d-t \geq 1$.
It is also a polynomial when $d+t+1 \geq  m$ because in this case
  $p_{T}(x)$ is divisible by $(x-1)^{d+t-m}$.
\begin{proof}
We begin with a calculation analogous to that in the proof of Lemma \ref{lem:laguerreFormula}:
\begin{align*}
\sum_{i \notin T} p_{T \union \setof{i}}(x) 
&= \sum_{i \notin T} \mydet{xI - U_{T} - u_i u_i^T}
\\&= \sum_{i \notin T} \mydet{xI - U_{T}}\left(1 - \Tr{ \left(xI - U_{T} \right)^{-1} u_i u_i^T} \right),
\quad  \text{by Lemma~\ref{lem:matrix_det}}
\\&= p_{T} (x) \left(m - t - \Tr{ \left(xI - U_{T} \right)^{-1} (I - U_{T})} \right),
\quad  \text{as $\sum_{i \notin T} u_{i} u_{i}^{T} = I-U_{T}$}
\\&= p_{T} (x) (m-t) -  
  p_{T} (x)\Tr{ \left(xI - U_{T} \right)^{-1} \left[ (I - xI) + (xI - U_{T}) \right]}
\\&= p_{T} (x) (m-t) -  
  d p_{T} (x) - p_{T} (x) \Tr{ \left(xI - U_{T} \right)^{-1} (I - xI) }
\\&= p_{T} (x) (m-t- d) 
   - p_{T} (x) \Tr{ \left(xI - U_{T} \right)^{-1}} (1-x)
\\&= (m-t-d) p_{T}(x) + (x-1) \deriv p_{T}(x),
\end{align*}
by Lemma \ref{lem:matrix_deriv}.
We finish the proof of the lemma by observing that
  for every function $f(x)$ and every $h \in \R$, 
\[
  \deriv (x-1)^h f(x) = h (x-1)^{h-1} f(x) + (x-1)^h \deriv f(x) .
\]
Thus,
\[
(x-1)^{-(m-d-t-1)} \deriv (x-1)^{m-d-t} p_{T}(x)
=
(m-t-d) p_{T}(x) + (x-1) \deriv p_{T}(x).
\]
\end{proof}

By applying this lemma many times, 
 we obtain an expression for the polynomials labeling internal vertices of the tree.

\begin{lemma}\label{lem:jacobiInternal}
For every $T \subset [m]$ of size $t \leq k$,
\[
f_{T}(x)
= \frac{1}{\binom{m-t}{k-t}} 
\sum_{\substack{S \supset T \\ |S| = k}} p_{S}(x)
= \frac{(m-k)!}{(m-t)!} (x-1)^{-(m-d-k )} \kderiv{k-t} (x-1)^{m-d-t} p_{T}(x).
\]
In particular,
\[
f_{\emptyset}(x)
= \frac{(m-k)!}{m!} (x-1)^{-(m-d-k)} \kderiv{k} (x-1)^{m-d} x^d.
\]
\end{lemma}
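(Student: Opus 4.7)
The first equality in the lemma is simply the definition of $f_T$ unpacked, since there are $\binom{m-t}{k-t}$ sets $S \supset T$ of size $k$. The content is the second equality, which I will prove by downward induction on $t$ (equivalently, by induction on $k-t$). Writing $q_T(x) := \sum_{|S|=k,\,S\supset T} p_S(x) = \binom{m-t}{k-t} f_T(x)$, the target formula becomes
\[
q_T(x) \;=\; \frac{1}{(k-t)!}\,(x-1)^{-(m-d-k)}\, \kderiv{k-t}\, (x-1)^{m-d-t}\, p_T(x).
\]
The base case $t=k$ is immediate: the sum reduces to the single term $p_T$, and $\kderiv{0}$ is the identity, so both sides equal $p_T(x)$.

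For the inductive step, I will use the elementary double-counting identity
\[
\sum_{i \notin T} q_{T \cup \{i\}}(x) \;=\; \sum_{\substack{|S|=k\\ S\supset T}} |S \setminus T|\, p_S(x) \;=\; (k-t)\, q_T(x),
\]
so that $q_T = \frac{1}{k-t}\sum_{i\notin T} q_{T\cup\{i\}}$. Applying the inductive hypothesis to each term and pulling the linear operator $(x-1)^{-(m-d-k)}\kderiv{k-t-1}(x-1)^{m-d-t-1}$ outside the sum yields
\[
q_T(x) \;=\; \frac{1}{(k-t)!}\, (x-1)^{-(m-d-k)} \kderiv{k-t-1} (x-1)^{m-d-t-1} \sum_{i\notin T} p_{T\cup\{i\}}(x).
\]
By Lemma~\ref{lem:jacobiAddOne}, the inner sum equals $(x-1)^{-(m-d-t-1)}\deriv (x-1)^{m-d-t} p_T(x)$. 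The matching powers of $(x-1)$ cancel and $\kderiv{k-t-1}\deriv = \kderiv{k-t}$, producing exactly the claimed expression. The ``in particular'' statement then follows by taking $T = \emptyset$, $t=0$, and noting that $p_\emptyset(x) = x^d$.

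The main bookkeeping concern is not algebraic but rather the tracking of polynomiality: the factor $(x-1)^{m-d-t}$ and its inverse can appear with negative exponents, so the intermediate expressions are a priori rational. This is handled exactly as in the discussion following Lemma~\ref{lem:jacobiAddOne}, where the two natural factorizations always give at least one genuinely polynomial form; the cancellation above is valid as a formal rational identity, and polynomiality of the result is guaranteed because the left-hand side $q_T$ is manifestly a sum of polynomials.
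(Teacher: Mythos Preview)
Your proof is correct and follows essentially the same route as the paper: induction on $k-t$ with the same base case, the same double-counting identity, and the same use of Lemma~\ref{lem:jacobiAddOne} to collapse one layer at a time. The only cosmetic difference is the order in which you invoke the two ingredients: the paper applies Lemma~\ref{lem:jacobiAddOne} at the outer level (to sets $S'$ of size $k-1$) and then the inductive hypothesis, whereas you apply the inductive hypothesis first (to $q_{T\cup\{i\}}$) and then Lemma~\ref{lem:jacobiAddOne} at the inner level (to $T$ itself); the resulting cancellation of $(x-1)$ factors is identical.
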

\begin{proof}
We will prove by induction on $k$ that 
\[
\sum_{\substack{S \supset T \\ |S| = k}} p_{S}(x)
= \frac{1}{(k-t)!} (x-1)^{-(m-d-k)} \kderiv{k-t} (x-1)^{m-d-t} p_{T}(x).
\]
For $k = t$, the polynomial is simply $p_{T}(x)$.
To establish the induction, observe that for $k > t$
\[
\sum_{\substack{S \supset T \\ |S| = k}} p_S(x) 
= \frac{1}{k-t} \sum_{\substack{S \supset T\\ |S| = k-1}} \sum_{t \notin S} p_{S + t}(x)
\]
and apply the inductive hypothesis and Lemma \ref{lem:jacobiAddOne} to obtain
\begin{align*}
\sum_{\substack{S \supset T \\ |S| = k}} p_S(x) 
& = \frac{1}{k-t}
(x-1)^{-(m-d- (k-1) -1)} \deriv (x-1)^{m-d- (k-1)} \times
\\
& \times
\frac{1}{(k-t-1)!} (x-1)^{-(m-d-(k-1))} \kderiv{k-1-t} (x-1)^{m-d-t} p_{T}(x)
\\
& = \frac{1}{(k-t)!}
(x-1)^{-(m-d-k)} 
 \kderiv{k-t} (x-1)^{m-t} p_{T}(x).
\end{align*}
\end{proof}

\begin{theorem}\label{thm:subsetInterlace}
The polynomials $p_{S}(x)$ for $\sizeof{S} = k$ are an interlacing family.
\end{theorem}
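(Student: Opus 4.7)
The plan is to verify the two properties in Definition~\ref{def:if} for the tree with internal nodes labeled by $f_T$ and leaves labeled by $p_S$: (a) each $f_T$ is a convex combination of the polynomials at its children, and (b) any two siblings have all convex combinations real-rooted (equivalently, as in the footnote to Definition~\ref{def:if}, all convex combinations of all children are real-rooted). Condition (a) will follow from a counting argument, while (b) will follow by combining Lemma~\ref{lem:jacobiInternal}, Claim~\ref{clm:cauchyInterlacing}, and Theorem~\ref{thm:Fisk} with a check that a certain differential operator preserves real-rootedness.

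For condition (a), fix an internal node $T$ of size $t$. Each $S\supset T$ with $|S|=k$ is counted in the sum defining $f_{T\cup\{i\}}$ exactly for the $k-t$ choices of $i\in S\setminus T$. Using $\binom{m-t}{k-t} = \tfrac{m-t}{k-t}\binom{m-t-1}{k-t-1}$, a direct count gives $\tfrac{1}{m-t}\sum_{i\notin T} f_{T\cup\{i\}}(x) = f_T(x)$, so $f_T$ is the uniform average of its children.

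For condition (b), fix $T$ of size $t$ and any nonnegative $\mu_i$ ($i\notin T$) with $\sum_i\mu_i=1$. By Lemma~\ref{lem:jacobiInternal} there is a differential operator $D := (x-1)^{-(m-d-k)}\,\partial_x^{k-t-1}\,(x-1)^{m-d-t-1}$ and a constant $C$ (both depending only on $t$, not on $i$) such that $f_{T\cup\{i\}}(x) = C\cdot D\,p_{T\cup\{i\}}(x)$. By linearity, $\sum_i \mu_i f_{T\cup\{i\}}(x) = C\cdot D\,q(x)$, where $q := \sum_i \mu_i p_{T\cup\{i\}}$. The polynomials $p_{T\cup\{i\}}(x) = \mydet{xI - U_T - u_iu_i^T}$ are rank-one updates of a common matrix, so Claim~\ref{clm:cauchyInterlacing} gives them a common interlacing, and Theorem~\ref{thm:Fisk} then implies $q$ is real-rooted.

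It remains to show $D$ preserves real-rootedness of $q$. Set $a = m-d-k$ and $c = k-t-1$, so $a+c = m-d-t-1$. When $a+c\ge 0$, multiplying $q$ by $(x-1)^{a+c}$ preserves real-rootedness. When $a+c<0$ (i.e., $t\ge m-d$), rewrite $p_{T\cup\{i\}}(x) = \mydet{(x-1)I + U_{[m]\setminus(T\cup\{i\})}}$, using $\sum_i u_iu_i^T = I$; since $U_{[m]\setminus(T\cup\{i\})}$ has rank at most $m-t-1$, each $p_{T\cup\{i\}}$, hence $q$, has $x=1$ as a zero of multiplicity at least $d+t+1-m = |a+c|$, so $(x-1)^{a+c}q$ is a real-rooted polynomial. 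Then $\partial_x^c$ preserves real-rootedness by Rolle's theorem. Finally, $(x-1)^{-a}$ multiplies (if $a\le 0$) or divides (if $a>0$); tracking multiplicity at $x=1$ shows the polynomial after $\partial_x^c$ has a zero at $1$ of multiplicity at least $a$ when $a>0$, so the division yields a polynomial and preserves real-rootedness. Hence $Dq$ is real-rooted, verifying (b) and completing the proof. The main technical nuisance is this last step: the sign cases of $a$ and $a+c$ and the rank-based divisibility of $q$ at $x=1$ must be carefully tracked so that $D$ is both well-defined on $q$ and real-rootedness-preserving.
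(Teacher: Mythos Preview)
Your proof is correct and follows essentially the same approach as the paper: condition (a) by the counting/averaging identity, and condition (b) by pulling the differential operator from Lemma~\ref{lem:jacobiInternal} outside the convex combination, reducing to real-rootedness of $q=\sum_i\mu_i p_{T\cup\{i\}}$ via Claim~\ref{clm:cauchyInterlacing} and Theorem~\ref{thm:Fisk}. The only differences are cosmetic: you treat all children at once (the paper treats pairs and invokes the footnote to Definition~\ref{def:if}), and you spell out the sign cases for $a=m-d-k$ and $a+c=m-d-t-1$ together with the rank bound on $U_{[m]\setminus(T\cup\{i\})}$ to justify the divisions by powers of $(x-1)$, whereas the paper simply asserts that multiplication by $(x-1)$, differentiation, and division by $(x-1)$ when $1$ is a root preserve real-rootedness.
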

\begin{proof}
As explained above, the internal nodes of the tree are the polynomials
  $f_{T}(x)$ for $T \subset [m]$ and $\sizeof{T} < k$.
By definition these polynomials satisfy condition $a$ of an interlacing family.
We now show that they also satisfy condition $b$.

Let $T \subset [m]$ have size less than $k$.
We must prove that for every $i$ and $j$ not in $T$ and all $0 \leq \mu \leq 1$,
\[
  f_{\mu}(x) \defeq   \mu  f_{T \union \setof{i}}(x) + (1-\mu ) f_{T \union \setof{j}}(x)
\]
is real-rooted.

As Claim \ref{clm:cauchyInterlacing} tells us that 
  $p_{T \union \setof{i}}$ and $p_{T \union \setof{j}}$
  have a common interlacing,
  Theorem \ref{thm:Fisk}
  implies that 
\[
p_{\mu}(x) \defeq \mu p_{T \union \setof{i}}(x) + (1-\mu ) p_{T \union \setof{j}}(x)
\]
  is real rooted.
Lemma \ref{lem:jacobiInternal} implies that
\[
f_{\mu}(x)
=
\frac{(m-k)!}{(m-t)!} (x-1)^{-(m-d-k )} \kderiv{k-t} (x-1)^{m-d-t} p_{\mu} (x).
\]
So, we can see that $f_{\mu}(x)$ is real rooted by observing that 
  real rootedness is preserved by 
  multiplication by $(x-1)$, taking derivatives, and dividing by
  $(x-1)$ when $1$ is a root.
\end{proof}

We begin proving a lower bound on the $k$th largest root of
  $f_{\emptyset}(x)$ by expressing it as the smallest root of a simpler polynomial.

\begin{lemma}\label{lem:jacRootTransform}
The $k$th largest root of $f_{\emptyset} (x)$ is equal to the smallest root of the polynomial
\begin{equation}\label{eqn:lem:jacRootTransform}
\kderiv{d} (x-1)^{m-k}x^k.
\end{equation}
\end{lemma}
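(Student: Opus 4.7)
The plan is to rewrite $f_{\emptyset}(x)$ from Lemma \ref{lem:jacobiInternal} in a form that exposes $R(x) := \partial_x^d[(x-1)^{m-k}\,x^k]$ as an explicit factor, and then read off the $k$-th largest root of $f_{\emptyset}$ by a direct root count on that factorization.

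The technical core will be the polynomial identity
\[
(m-k)!\,\partial_x^k\bigl[(x-1)^{m-d}\,x^d\bigr] \;=\; (m-d)!\,x^{d-k}\,\partial_x^d\bigl[(x-1)^{m-k}\,x^k\bigr], \qquad (*)
\]
which exchanges the roles of $d$ and $k$. To prove $(*)$ I would expand each side via Leibniz's rule: both become finite sums, indexed by an integer $j$, of terms of the common shape $x^{d-j}(x-1)^{m-d-k+j}$. Matching coefficients then reduces to the elementary binomial identity $\binom{k}{j}\,d!/(d-j)! = \binom{d}{j}\,k!/(k-j)!$ together with the observation that the remaining factorials contribute the $j$-independent ratio $(m-k)!/(m-d)!$. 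Substituting $(*)$ into Lemma~\ref{lem:jacobiInternal} would then yield
\[
f_{\emptyset}(x) \;=\; \tfrac{(m-d)!}{m!}\,(x-1)^{k+d-m}\,x^{d-k}\,R(x).
\]

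To finish, I would analyze $R$ using Rolle's theorem and two short Taylor-coefficient calculations. Since $(x-1)^{m-k}x^k$ has all $m$ of its roots in $[0,1]$, its $d$-th derivative $R$ has all $m-d$ of its roots in $[0,1]$; expanding $(x-1)^{m-k}x^k$ around $x=0$ and $x=1$ shows that $R(0)\neq 0$ and that $R$ vanishes at $1$ to order exactly $\max(m-d-k,0)$, so the remaining roots of $R$ lie strictly in $(0,1)$. Combining these facts with the factorization above, the $d$ roots of $f_{\emptyset}$ in decreasing order will consist of $1$ with multiplicity $\max(k+d-m,0)$, then the roots of $R$ in $(0,1)$ listed in decreasing order, then $0$ with multiplicity $d-k$. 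This description holds regardless of the sign of $m-d-k$: either the $(x-1)^{m-d-k}$ factor of $R$ cancels against the $(x-1)^{-(m-d-k)}$ out front (when $m-d-k\ge 0$), or the outer factor appears as an honest factor of $f_{\emptyset}$ and supplies the copies of the root $1$ (when $m-d-k<0$). In every case the $k$-th position of this list falls exactly on the smallest root of $R$, proving the lemma. The main obstacle will be the identity $(*)$: it is purely algebraic but easy to mis-bookkeep, and every subsequent step depends on having it right.
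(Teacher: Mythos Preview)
Your proof is correct and follows essentially the same route as the paper: both hinge on the Leibniz identity $(*)$, which the paper proves in exactly the form you describe. The only difference is in how the roots are located. The paper observes up front that $f_{\emptyset}$ is an average of characteristic polynomials of matrices with spectrum in $[0,1]$, so all its roots lie in $[0,1]$; it then strips off $x^{d-k}$ and notes that multiplying or dividing by a power of $(x-1)$ cannot change the \emph{smallest} remaining root. You instead work directly on $R$, using Rolle's theorem to confine its roots to $[0,1]$ and Taylor expansions at $0$ and $1$ to pin down the exact multiplicities there, and then read off the $k$th root from the factorization. Your argument is a bit more self-contained (it never invokes the operator-theoretic meaning of $f_{\emptyset}$) at the cost of the extra Taylor-coefficient bookkeeping; the paper's argument is shorter because the spectral bound on $f_{\emptyset}$ makes the case split on the sign of $m-d-k$ nearly invisible.
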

\begin{proof}
As all the eigenvalues of $U_{S}$ for every subset $S$ are less than $1$
  and because $U_{S}$ is positive semidefinite, all the zeros of $p_{S}(x)$ are between
  $0$ and $1$.
As all polynomials $p_{S}(x)$ are monic, they are all positive for $x > 1$ and thus
  $f_{\emptyset}(x)$ is as well.
This argument, and a symmetric one for $x < 0$, implies that all the zeros of $f_{\emptyset}(x)$
  are between $0$ and $1$.

The polynomial $f_{\emptyset}(x)$ has at least $d-k$ zeros at 0.
So, its $k$th largest root is the smallest root of
\[
 x^{-(d-k)} (x-1)^{-(m-d-k)} \kderiv{k} (x-1)^{m-d} x^d.  
\]
As all the zeros of this polynomial are between $0$ and $1$, its smallest root
  is also the smallest root of
\[
 x^{-(d-k)} \kderiv{k} (x-1)^{m-d} x^d.  
\]
We now show that this latter polynomial is a constant multiple of
  \eqref{eqn:lem:jacRootTransform}.

\begin{align*}
\kderiv{k} (x-1)^{m-d} x^{d}
& = 
 \sum_{i=0}^{k} \binom{k}{i} 
     \left(\kderiv{k-i} (x-1)^{m-d} \right)
     \left(\kderiv{i} x^{d} \right)
\\
& = 
 \sum_{i=0}^{d} \binom{k}{i} 
     \left(\kderiv{k-i} (x-1)^{m-d} \right)
     \left(\kderiv{i} x^{d} \right),
\text{as $\kderiv{i} x^{d} = 0$ for $i > d$,}
\\
& = 
 \sum_{i=0}^{d} \frac{k!}{i! (k-i)!}
  \frac{(m-d)! (x-1)^{m-d-k+i}}{(m-d-k+i)!}     
  \frac{d! x^{d-i}}{(d-i)!}
\\
& = 
 \frac{(m-d)!}{(m-k)!}
x^{d-k}
 \sum_{i=0}^{d} \frac{d!}{i! (d-i)!}
  \frac{(m-k)! (x-1)^{m-d-k+i}}{(m-d-k+i)!}     
  \frac{k! x^{k-i}}{(k-i)!}
\\
& = 
 \frac{(m-d)!}{(m-k)!}
x^{d-k}
\kderiv{d} (x-1)^{m-k}x^{k}.
\end{align*}
\end{proof}

We use the following lemma to prove a lower bound on the smallest
  zero of the polynomial in \eqref{eqn:lem:jacRootTransform}.
This lemma may be found in \cite[Lemma 4.2]{MSSfinite}, or may be proved
  by applying Lemma  \ref{lem:barriershift} in the limit as
  $\lambda $ grows large.

\begin{lemma}\label{lem:amin-deriv}
For every real rooted polynomial $p (x)$ of degree at least two and $\alpha > 0$,
\[
	\amin{\deriv p(x)} \geq \amin{p (x)} + \alpha .
\]
\end{lemma}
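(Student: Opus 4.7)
The plan is to apply Lemma~\ref{lem:barriershift} with parameter $\lambda > 0$ and let $\lambda \to \infty$, as suggested in the statement. For each such $\lambda$, Lemma~\ref{lem:barriershift} yields
\[
\amin{(1-\lambda\deriv)p(x)} \geq \amin{p(x)} + \frac{1}{1/\lambda + 1/\alpha}.
\]
The right-hand side clearly converges to $\amin{p(x)} + \alpha$, so it suffices to show that the left-hand side converges to $\amin{\deriv p(x)}$.

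To handle the left-hand side, I would set $q_\lambda := (1-\lambda\deriv)p = p - \lambda p'$ and expand
\[
q_\lambda + \alpha q_\lambda' = p - (\lambda - \alpha)p' - \alpha\lambda\, p''.
\]
Since scaling by a nonzero constant preserves roots, $\amin{q_\lambda}$ is also the smallest root of $(q_\lambda + \alpha q_\lambda')/(\alpha\lambda)$. A direct calculation shows this rescaled polynomial converges coefficient-wise as $\lambda \to \infty$ to $-(1/\alpha)(p' + \alpha p'')$, whose smallest root is $\amin{\deriv p}$ by definition.

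What remains is a continuity-of-roots argument complicated by a degree drop. The polynomial $q_\lambda + \alpha q_\lambda'$ has degree $d := \deg p$ and is real-rooted: $q_\lambda$ is real-rooted by Lemma~\ref{lem:barriershift}, and the operator $r \mapsto r + \alpha r'$ preserves real-rootedness (apply Rolle's theorem to $r(x)e^{x/\alpha}$, whose derivative equals $\alpha^{-1}e^{x/\alpha}(r + \alpha r')$). After division by $\alpha\lambda$, the polynomial has degree $d$ with shrinking positive leading coefficient $1/(\alpha\lambda)$, whereas the limit polynomial has degree $d-1 \geq 1$ (using $\deg p \geq 2$) with negative leading coefficient $-d/\alpha$. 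A standard sign comparison then forces exactly one root to escape to $+\infty$, so the remaining $d-1$ roots, including the smallest, converge to the roots of the limit polynomial. Passing to the limit in the displayed inequality above yields $\amin{\deriv p} \geq \amin{p} + \alpha$.

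The only nontrivial point is the degree-drop continuity: one has to verify that the root escaping to infinity is the largest rather than the smallest. This is pinned down by the opposing signs of the leading coefficients of the rescaled polynomial and its coefficient-wise limit, after which the rest of the argument is routine.
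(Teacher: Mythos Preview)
Your proposal is correct and follows exactly the route the paper indicates: the paper does not spell out a proof but says the lemma ``may be proved by applying Lemma~\ref{lem:barriershift} in the limit as $\lambda$ grows large,'' which is precisely what you do. Your handling of the degree drop (via the sign of the leading coefficients, equivalently Vieta's formula showing the sum of the roots of the rescaled polynomial tends to $+\infty$) is the right way to fill in the one detail the paper omits.
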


\begin{theorem}
For $m > d > k$,
\label{thm:norepmin}
\begin{equation}
\label{jbound}
\lambda_k(f_{\emptyset}(x)) 
> \frac{\left(\sqrt{d(m-k)} - \sqrt{k(m-d)}\right)^2}{m^2} .
\end{equation}
\end{theorem}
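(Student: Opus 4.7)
By Lemma~\ref{lem:jacRootTransform}, it suffices to lower bound the smallest root of
\[
  q_d(x) \defeq \kderiv{d} (x-1)^{m-k} x^k.
\]
Let $q(x) \defeq (x-1)^{m-k} x^k$, so $q_d = \kderiv{d} q$. Since $q$ is real rooted with roots in $[0,1]$, we have $q_d$ real rooted (derivatives preserve real rootedness) with all zeros also in $[0,1]$. The plan is to bound $\lmin(q_d)$ from below using the barrier/$\aminName$ framework: for any $\alpha > 0$,
\[
  \lmin(q_d) > \amin{q_d} \geq \amin{q} + d\alpha,
\]
where the first inequality holds because $\amin{\cdot} < \lmin(\cdot)$ always, and the second is $d$ applications of Lemma~\ref{lem:amin-deriv} (valid since $\deg q_j = m - j \geq 2$ for $j < d$ under our hypotheses). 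We then optimize over $\alpha > 0$.

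The next step is to compute $\amin{q}$. Writing $\amin{q} = -y$ with $y>0$, the defining equation $\Phi_q(-y) = 1/\alpha$ becomes
\[
\frac{k}{y} + \frac{m-k}{1+y} = \frac{1}{\alpha},
\qquad \text{equivalently}\qquad
y^{2} + (1-\alpha m)\, y - \alpha k = 0.
\]
Since the product of the roots of this quadratic is $-\alpha k < 0$, there is a unique positive root $y = y(\alpha)$, and $\amin{q} = -y(\alpha)$. We therefore need to maximize
\[
  F(\alpha) \defeq d\alpha - y(\alpha)
\]
over $\alpha > 0$.

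To carry out the optimization, I would reparameterize by $y$ itself: solving the quadratic for $\alpha$ gives $\alpha = y(y+1)/(my+k)$, whence
\[
  F = \frac{dy(y+1)}{my + k} - y
    = \frac{-y\bigl((m-d)y + (k-d)\bigr)}{my + k}.
\]
Setting $dF/dy = 0$ yields a quadratic in $y$ whose positive root simplifies (after cancellations using $k(m-d) + m(d-k) = d(m-k) \cdot 1 - \ldots$, i.e.\ the identity used below) to
\[
  y^{*} = \frac{\sqrt{kd(m-k)/(m-d)} - k}{m},
  \qquad
  my^{*} + k = \sqrt{kd(m-k)/(m-d)}.
\]
Substituting back, the numerator $-y^{*}\bigl((m-d)y^{*} + k - d\bigr)$ factors as
$\sqrt{kd(m-k)/(m-d)}\cdot (\sqrt{d(m-k)} - \sqrt{k(m-d)})^{2}/m^{2}$, and dividing by $my^* + k$ gives exactly $(\sqrt{d(m-k)} - \sqrt{k(m-d)})^{2}/m^{2}$. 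Strictness of the bound then follows from the strict inequality $\lmin(q_d) > \amin{q_d}$.

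The main obstacle is not conceptual but algebraic: extracting the right factorizations so that the optimal value of $F$ lands on the stated closed form. Reparameterizing by $y$ (rather than working with $\alpha$ throughout) makes the critical-point equation a simple quadratic and, crucially, lets the key combination $(m-d)y^{*} + k - d$ be written as $\sqrt{d(m-k)}\bigl(\sqrt{k(m-d)} - \sqrt{d(m-k)}\bigr)/m$, which is what produces the clean square in the final bound.
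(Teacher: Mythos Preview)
Your proof is correct and takes essentially the same approach as the paper: reduce via Lemma~\ref{lem:jacRootTransform}, apply Lemma~\ref{lem:amin-deriv} $d$ times to $\amin{(x-1)^{m-k}x^k}$, and optimize the resulting one-parameter lower bound. The only difference is cosmetic---you reparameterize the optimization by $y=-\amin{q}$ (turning the objective into a rational function of $y$), whereas the paper optimizes directly over $\alpha$ and solves for the critical $\alpha^{*}$ in closed form.
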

\begin{proof}
One can check by substitution that
\[
\amin{(x-1)^{m-k} x^k} 
= \frac{(1- \alpha m ) - \sqrt{(1 - \alpha m)^2 + 4 \alpha k}}{2} .
\]
Applying Lemma \ref{lem:amin-deriv} $d$ times gives
\[
\amin{ \deriv^{d} (x-1)^{m-k} x^k} 
\geq \frac{(1- \alpha m) - \sqrt{(1 - \alpha m)^2 + 4 \alpha k}}{2} + d \alpha .
\]
Define
\begin{equation}\label{eqn:norepmin}
  u(\alpha ) = \frac{(1- \alpha m) - \sqrt{(1 - \alpha m)^2 + 4 \alpha k}}{2} + d \alpha.
\end{equation}
We now derive the value of $\alpha$ at which $u(\alpha )$ is maximized.

Taking derivatives in $\alpha$ gives
\[
u'(\alpha) 
= d - \frac{m}{2} - \frac{2k + \alpha m^2 - m}{2\sqrt{(1 - \alpha m)^2 + 4 \alpha k}}.
\]
Notice that 
\[
u'(0)
= d - k \geq 0
\AND
\lim_{\alpha \to \infty} u'(\alpha) 
= d - m \leq 0.
\]
By continuity, a maximum will occur at a point $\alpha^* \geq 0$ at which $u'(\alpha^*) = 0$.
The solution is given by
\[
\alpha^* 
= \frac{m-2k}{m^2} - \frac{m-2d}{m^2}\sqrt{\frac{k(m-k)}{d(m-d)}},
\]
which is positive for $m > d > k$.

After observing that
\[
(1-\alpha^{*}m)^{2} + 4 \alpha^{*} k = \frac{k (m-k)}{d (m-d)},
\]
we may plug $\alpha^{*}$ into the definition of $u$ to obtain
\begin{align*}
u(\alpha^*) 
& = \frac{d}{m} + \frac{k}{m} - \frac{2dk}{m^2}
   + \frac{1}{m^2}\left(2 d^{2} - 2 d m\right)\sqrt{\frac{k(m-k)}{d(m-d)}}
\\
& =  \frac{1}{m^2} \left(dm + km - 2dk
  - 2 \sqrt{k(m-k) d (m-d)}
  \right)
\\
& =  \frac{1}{m^2} \left(
  \sqrt{d(m-k)} - \sqrt{k(m-d)}
  \right)^{2}.
\end{align*}

\end{proof}

\subsection{Jacobi polynomials}\label{sec:JacobiConnection}

Rodrigues' formula for the degree $n$ Jacobi polynomial with parameters $\alpha$ and $\beta$ is
\cite[(4.3.1)]{Szego}
\[
P_{n}^{(\alpha, \beta)}(x) =
\frac{(-1)^{n}}{2^{n} n!}
 (1-x)^{-\alpha}(1+x)^{-\beta} \kderiv{n} (1-x)^{\alpha + n}(1 + x)^{\beta + n}.
\]
These are related to the polynomials $f_{\emptyset}(x)$ from the previous section
  by
\[
  P_{k}^{(m-d-k,d-k)}(2 x - 1) = \binom{m}{k} x^{-(d-k)} f_{\emptyset}(x).
\]

Thus, lower bounds on the $k$th smallest root of $f_{\emptyset} (x)$ translate into lower
  bounds on the smallest root of Jacobi polynomials.
By using the following claim, we can improve the lower bound from Theorem \ref{thm:norepmin}
  to
\begin{equation}\label{eqn:lowerJacImproved}
  \frac{\left(\sqrt{(d+1) (m-k)} - \sqrt{k(m-d-1)}\right)^2}{m^2}.
\end{equation}

\begin{claim}\label{clm:amin}
For every real-rooted polynomial $p(x)$ and $\alpha > 0$,
\[
  \amin{p} + \alpha \leq   \lmin (p).
\]
\end{claim}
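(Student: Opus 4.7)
The plan is to read off the inequality directly from the characterization of $\aminName$ in terms of the lower barrier function. Let $b = \amin{p}$ and let $\mu_{d} \leq \mu_{d-1} \leq \dotsb \leq \mu_{1}$ denote the zeros of $p$, so that $\mu_{d} = \lmin(p)$. The paper has already noted that $\amin{p}$ is strictly less than the smallest root of $p$, so $b < \mu_{d}$, which means every term in
\[
\Phi_{p}(b) = \sum_{i=1}^{d} \frac{1}{\mu_{i} - b}
\]
is \emph{positive}. This positivity is the only structural fact needed.

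Next I will use that $b$ is characterized by $\Phi_{p}(b) = 1/\alpha$, which follows from the equivalent definition $\amin{p} = \min\setof{z : \Phi_{p}(z) = 1/\alpha}$ stated in the preliminaries. Dropping all terms in $\Phi_{p}(b)$ except the one corresponding to $\mu_{d}$ yields
\[
\frac{1}{\alpha} = \Phi_{p}(b) \geq \frac{1}{\mu_{d} - b},
\]
and rearranging gives $\mu_{d} - b \geq \alpha$, i.e., $\amin{p} + \alpha \leq \lmin(p)$, as desired.

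There is essentially no obstacle here: the claim is a one-line consequence of the definitions once one recalls that $b$ lies below all roots (so every summand in $\Phi_{p}$ has a consistent sign) and that $\Phi_{p}(b) = 1/\alpha$. The only mild care needed is to handle the case where $p$ might have repeated roots, but this is automatic since the bound only uses the single summand at $\mu_{d}$, whose multiplicity only increases $\Phi_{p}(b)$ and thus strengthens the inequality.
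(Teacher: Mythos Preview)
Your proof is correct and follows essentially the same approach as the paper: both use that $\amin{p}$ lies below all roots, invoke $\Phi_{p}(b)=1/\alpha$, and drop all summands except the one at $\lmin(p)$ to obtain $1/\alpha \geq 1/(\lmin(p)-b)$. The only cosmetic difference is that the paper spends a sentence justifying via continuity that such a $z$ exists, whereas you cite the preliminaries directly.
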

\begin{proof}
We first observe that there is a $z < \lmin (p)$ for which
  $\Phi_{p}(z) = 1/\alpha$.
This holds because $\Phi_{p}(z)$ is continuous for $z < \lmin (p)$,
  approaches infinity as $z$ approaches $\lmin (p)$ from below,
  and approaches zero as $z$ becomes very negative.
For a $z < \lmin $ such that $\Phi_{p}(z) = 1/\alpha$, we have
\[
  1/\alpha = \Phi_{p}(z) \geq \frac{1}{ \lmin - z}.
\]
The claim follows.
\end{proof}

The bound that \eqref{eqn:lowerJacImproved} implies for Jacobi polynomials
  seems to be incomparable to the bound obtained by Krasikov \cite{KrasikovAnsatz},
  although numerical evaluation suggests that Krasikov's bound is usually stronger.

\bibliographystyle{alpha}
\bibliography{IFbib}

\end{document}